\documentclass[11pt]{amsart}
\usepackage{amsmath,amssymb}
\newtheorem{theorem}{Theorem}[section]
\newtheorem{proposition}[theorem]{Proposition}
\newtheorem{corollary}[theorem]{Corollary}
\newtheorem{lemma}[theorem]{Lemma}
\theoremstyle{definition}
\newtheorem{definition}[theorem]{Definition}
\newtheorem{remark}[theorem]{Remark}

\begin{document} 

\title[Spacetime positive energy theorem in arbitrary dimension]{On the spacetime positive energy theorem in arbitrary dimension}
\author{Simon Brendle and Yipeng Wang}
\address{Columbia University \\ 2990 Broadway \\ New York NY 10027 \\ USA}
\address{Columbia University \\ 2990 Broadway \\ New York NY 10027 \\ USA}
\thanks{The first author was supported by the National Science Foundation under grant DMS-2403981 and by the Simons Foundation.}
\begin{abstract}
We describe how the spacetime positive energy theorem in dimension $n \geq 4$ follows from our recent work on the Riemannian version of the positive mass theorem. Our work builds on the fundamental work of Schoen and Yau in dimension $n=3$ and its extension to dimension $3 \leq n \leq 7$ in the remarkable work of Eichmair. The proof relies on the Jang equation with a capillary term. We also use the shielding principle from the work of Lesourd-Unger-Yau.
\end{abstract}
\maketitle

\section{Introduction}

Let $(M,g)$ be a Riemannian manifold of dimension $n \geq 4$, and let $q$ be a symmetric $(0,2)$-tensor on $M$. We define a scalar function $\mu$ on $M$ by 
\[\mu = \frac{1}{2} \, (R_g - |q|_g^2 + \text{\rm tr}_g(q)^2).\] 
We define a one-form $J$ on $M$ by 
\[J_k = g^{ij} \, (D_i q_{jk} - D_k q_{ij}) = g^{ij} \, D_i q_{jk} - \partial_k \text{\rm tr}_g(q).\] 

\begin{theorem}
\label{main.theorem}
Suppose that $(M,g)$ is a Riemannian manifold of dimension $n \geq 4$, and $q$ is a symmetric $(0,2)$-tensor on $M$. We assume that there exists a compact subset $K \subset M$ such that $M \setminus K$ is diffeomorphic to the complement of a ball in $\mathbb{R}^n$. Moreover, we assume that there exist real numbers $\alpha$ and $\delta>0$ such that 
\[|\bar{D}^m(g - (1+\alpha \, r^{2-n}) \, \bar{g})|_{\bar{g}} \leq C(m) \, r^{2-n-m-2\delta}\] 
and 
\[|\bar{D}^m q|_{\bar{g}} \leq C(m) \, r^{1-n-m}\] 
for all points near infinity and for every nonnegative integer $m$. Here, $\bar{g}$ denotes the Euclidean metric, $\bar{D}^m$ denotes the covariant derivative of order $m$ with respect to $\bar{g}$, and $r = \sqrt{x_1^2+\hdots+x_n^2}$ denotes the radial coordinate near infinity. Finally, we assume that $\mu - |J|_g > 0$ at each point in $M$. Then $\alpha > 0$.
\end{theorem}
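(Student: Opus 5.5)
We argue by contradiction: suppose $\alpha \leq 0$. The plan follows Schoen--Yau. We solve a Jang-type equation so that the graph of the solution carries a metric with a weak form of nonnegative scalar curvature and with mass controlled by $\alpha$. We then apply the Riemannian positive mass theorem from our earlier work, in the form valid in all dimensions $n \geq 4$, in a version that tolerates the singular behaviour coming from blow-up of the Jang solution.

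First, we replace the standard Jang equation by the regularized version with a capillary term,
\[\Big(g^{ij} - \frac{D^i f \, D^j f}{1+|Df|^2}\Big)\Big(\frac{D_iD_j f}{\sqrt{1+|Df|^2}} - q_{ij}\Big) = \tau f,\]
posed on large coordinate balls $B_\rho$ with zero boundary data. Following Eichmair, we construct solutions by a continuity or degree argument. The a priori estimates come from barriers near infinity built from the decay $|q| \leq C r^{1-n}$; these give $|f| \leq C r^{2-n+\epsilon}$ asymptotically, uniformly in $\rho$ and $\tau$. We then let $\rho\to\infty$ and $\tau\to 0$.

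The graph $\Sigma$ of $f$ in $M\times\mathbb{R}$ satisfies the Schoen--Yau identity
\[R_\Sigma \geq 2(\mu - |J|_g) + |h-q|^2 + 2|X|^2 - 2\,\text{div}_\Sigma X + O(\tau)\,\text{terms},\]
where $h$ is the second fundamental form of the graph. Since $\mu - |J|_g > 0$ strictly, the $\tau$-error terms can be absorbed on compact sets. This positivity is the reason for the strict hypothesis.

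The key difficulty is that in high dimensions we cannot expect regularity of limits of the graphs, and the Jang solution may blow up to $\pm\infty$ over regions bounded by MOTS-type hypersurfaces. Here we use the shielding principle of Lesourd--Unger--Yau. Rather than passing to a limit $\tau\to 0$, we keep $\tau>0$ small but fixed and work on a domain where the solution exists. The capillary term $\tau f$ keeps $f$ bounded on each $B_\rho$, so the graph is a smooth complete (with boundary far away) hypersurface. The region near infinity, where $f$ is small and the asymptotics are controlled, serves as the shielded end.

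On $\Sigma$ we use the conformal method. The operator $-\Delta_\Sigma + \frac{n-2}{4(n-1)}(R_\Sigma - |X|^2\text{-type terms})$ is nonnegative, by the identity above together with a Cauchy--Schwarz bound on the divergence term. We choose a conformal factor $u \to 1$ at infinity. The metric $u^{4/(n-2)}g_\Sigma$ then has nonnegative scalar curvature near the asymptotic end. Its mass is at most $\alpha$ plus an error that is $o(1)$ as $\tau\to 0$, with a strict decrease coming from $\mu-|J|>0$.

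We then apply the Riemannian theorem from our recent work in its shielded or incomplete form: an end with nonnegative scalar curvature, shielded by a region of positive scalar curvature, must have positive mass. This contradicts $\alpha\leq 0$. We could also obtain a quantitative negative mass and invoke the rigidity or strict-inequality version.

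We expect the main obstacle to be the uniformity of the mass and error estimates in $\tau$. Specifically, we need to show that the conformally changed Jang graph has mass bounded by $\alpha + o(1)$, while the strict positivity $\mu-|J|_g>0$ yields a uniform negative contribution that survives the limit. The analytic step we would attack first is a uniform asymptotic expansion of $f$ near infinity, $f = O(r^{2-n+\epsilon})$ with derivative bounds independent of $\tau$. This expansion ensures that $g_\Sigma - g = df\otimes df$ does not alter the mass.
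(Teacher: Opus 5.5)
You have the right ingredients: a capillary Jang equation with $\tau$ kept fixed, Lesourd--Unger--Yau shielding, and the Riemannian theorem in shielded form. But the mechanism that makes them fit together is missing, and the step you put in its place is false.

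\textbf{The capillary error term.} With $\tau$ fixed, the only a priori bound in the compact part is the maximum-principle bound $|f|\lesssim\tau^{-1}\sup_M n|q|_g$. In general nothing better holds, because this is where the Jang solution wants to blow up. So the error term $\tau f\,\mathrm{tr}_{\check g}(q)$ in the Schoen--Yau identity is of order $(\sup_M n|q|_g)\,n|q|_g$, independent of $\tau$. It is not $O(\tau)$, and it cannot be absorbed by $\mu-|J|_g$ on compact sets. It cannot be absorbed on the end either, since $\mu-|J|_g$ is merely positive, with no lower bound on its decay rate.

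The paper resolves this in three moves:
\begin{enumerate}
\item It cuts off the capillary term, taking $\Theta=\tau^2\zeta^2u$ with $\zeta=0$ on $\{r>8r_0\}$, so the equation is exactly Jang's on the end.
\item It fixes the constants in a definite order. First it fixes $\kappa_0,\kappa_1$ and $Q>0$ with $\mu-|J|_g-\kappa_0^2|d\zeta|_g^2-\kappa_1\zeta^2n|q|_g\ge Q$. Next it fixes the shield widths $s_0,s_1$ from a lower bound for $Q$ on $\mathcal N_g(E_0,2s_0)\setminus E_0$. Only then does it choose $\tau$.
\item It obtains $\tfrac12R_{\check g}-|\Xi|_{\check g}^2+\mathrm{div}_{\check g}\Xi\ge Q$ only on the set $\{|u|<\min\{\kappa_0\tau^{-1},\kappa_1\tau^{-2}\}\}$. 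Since $|du|_{\check g}\le 1$ for $\check g=g+du\otimes du$, this set contains the $\check g$-neighbourhood of $E_0$ of width $s_1+2s_0$, which is exactly the shield Lesourd--Unger--Yau require.
\end{enumerate}
Your remark that the region where $f$ is small ``serves as the shielded end'' gives no control on the width of that region relative to the curvature lower bound. That control is the whole point.

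\textbf{The conformal change.} It needs the conformal Laplacian to be positive on the whole graph. But the integrated inequality $\int|d\phi|^2+\tfrac12\int R\phi^2\ge\int Q\phi^2$ holds only for $\phi$ supported in the good region. So there is no global positive conformal factor tending to $1$, and you have not said what boundary condition would replace it. The paper does no conformal change. It feeds three things directly into the Brendle--Wang argument, which works with spectral rather than pointwise hypotheses:
\begin{itemize}
\item the spectral inequality on a domain $\hat E$;
\item the Lesourd--Unger--Yau function $\Phi$, with $Q+\tfrac12\Phi^2-2|d\Phi|_{\check g}\ge 2\hat Q$ and $\Phi\to-\infty$ on $\partial E$;
\item the mass of $\check g$.
\end{itemize}

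\textbf{Fixed $\tau$ versus $\tau\to 0$.} Your plan oscillates between the two. The obstacle you single out, uniformity in $\tau$ and a mass bound $\alpha+o(1)$, belongs to the $\tau\to0$ strategy. Its possibly singular limits are precisely what must be avoided in dimensions beyond Eichmair's range. With $\tau$ fixed and the cutoff in place, the Schoen--Yau/Eichmair barrier $b\circ r$ gives $|u|\le 2r_0^{n-2}r^{3-n}$, so $du\otimes du=O(r^{4-2n})$. The mass of $\check g$ is then exactly $\alpha$, and nothing needs to be made uniform.

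The decay $r^{2-n+\epsilon}$ you aim for is not attainable in general when $|q|_g\sim r^{1-n}$, but $r^{3-n}$ suffices.
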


The spacetime positive energy theorem in dimension $3$ was proved in a famous work of Schoen and Yau \cite{Schoen-Yau3}. In their proof, Schoen and Yau used the Jang equation (see \cite{Jang}) to reduce the spacetime positive energy theorem to the Riemannian positive mass theorem \cite{Schoen-Yau1},\cite{Schoen-Yau2}. The spacetime version of the positive energy theorem in dimension $3 \leq n \leq 7$ was proved by Eichmair \cite{Eichmair3} using the Jang equation, and by Eichmair-Huang-Lee-Schoen \cite{Eichmair-Huang-Lee-Schoen} using marginally outer trapped surfaces. The Jang equation was studied further in \cite{Eichmair1}, \cite{Eichmair2}, and \cite{Eichmair-Metzger}. Finally, Lohkamp \cite{Lohkamp} has proposed a proof of the spacetime positive energy theorem in all dimensions.

Our proof of Theorem \ref{main.theorem} relies on a modified Jang equation that includes a capillary term. This modified Jang equation is similar to the one introduced by Schoen and Yau \cite{Schoen-Yau3}. The latter equation also plays an important role in Eichmair's work \cite{Eichmair3}. 

\section{Proof of Theorem \ref{main.theorem}}

Suppose that $(M,g,q)$ is a triplet satisfying the assumptions of Theorem \ref{main.theorem}. We claim that $\alpha > 0$. To prove this, we argue by contradiction. Suppose that $\alpha \leq 0$. 

\subsection{A barrier construction from the work of Schoen-Yau and Eichmair} In this subsection, we recall a barrier construction for the Jang equation. This construction goes back to the fundamental work of Schoen and Yau \cite{Schoen-Yau3} in the three-dimensional case, and was later extended by Eichmair \cite{Eichmair3} to the higher-dimensional case. In this paper, we focus on the case $n \geq 4$ treated in Eichmair's work \cite{Eichmair3}.

\begin{proposition}[R.~Schoen, S.T.~Yau \cite{Schoen-Yau3}; M.~Eichmair \cite{Eichmair3}]
\label{barrier}
If $r_0$ is sufficiently large, then the following holds. We define a positive function $b: (r_0,\infty) \to (0,\infty)$ by 
\[b(s) = r_0 \int_{r_0^{-1} \, s}^\infty (t^{2n-4}-1)^{-\frac{1}{2}} \, dt\] 
for $s \in (r_0,\infty)$. Moreover, we define a positive function $\Upsilon$ on the domain $\{r > r_0\}$ by $\Upsilon = b \circ r$, where $r$ denotes the radial coordinate near infinity. Then 
\[g^{ik} \, g^{jl} \, \big ( g_{ij} - (1+|d\Upsilon|_g^2)^{-1} \, \partial_i \Upsilon \, \partial_j \Upsilon \big ) \, \big ( (1+|d\Upsilon|_g^2)^{-\frac{1}{2}} \, D_{k,l}^2 \Upsilon - q_{kl} \big ) < 0\] 
and 
\[g^{ik} \, g^{jl} \, \big ( g_{ij} - (1+|d\Upsilon|_g^2)^{-1} \, \partial_i \Upsilon \, \partial_j \Upsilon \big ) \, \big ( (1+|d\Upsilon|_g^2)^{-\frac{1}{2}} \, D_{k,l}^2 \Upsilon + q_{kl} \big ) < 0\] 
on the domain $\{r > r_0\}$. 
\end{proposition}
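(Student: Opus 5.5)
The plan is to compute the Jang operator of $\Upsilon$ first for the Euclidean metric with $q=0$. Then I show that the error terms coming from $g-\bar g$ and from $q$ are of lower order than a definite negative main term. The expression in the statement is the mean curvature of the graph of $\Upsilon$ in $(M \times \mathbb{R}, g + dt^2)$, up to the positive factor $(1+|d\Upsilon|^2)^{1/2}$ on the Hessian term, plus or minus the trace of $q$ restricted to the graph.

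I take $b'(s) = -(r_0^{-2n+4} s^{2n-4} - 1)^{-1/2} = -(\rho^{2n-4}-1)^{-1/2}$, where $\rho = s/r_0 > 1$. This function is chosen so that, in Euclidean space, the graph of $b(r)$ is half of the $n$-dimensional Riemannian Schwarzschild-type catenoid. That hypersurface is minimal in $\mathbb{R}^{n+1}$, and its asymptotic end is the graph of $\Upsilon$ with $\Upsilon \to 0$ at infinity. The difficulty is that an exactly minimal graph gives $H=0$, whereas we need a strict sign. So I would not expect the Euclidean term alone to be negative.

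The key step is therefore a careful expansion. For a radial function, the Euclidean Jang operator equals
\[\frac{b''}{(1+b'^2)^{3/2}} + \frac{(n-1)\, b'}{r\,(1+b'^2)^{1/2}}.\]
This is the mean curvature of the rotationally symmetric graph, and it vanishes identically for our $b$. I would check this directly: writing $b' = -(\rho^{2n-4}-1)^{-1/2}$, we get $1+b'^2 = \rho^{2n-4}/(\rho^{2n-4}-1)$. The identity $b''/(1+b'^2) = -(n-1)b'/r$ then follows from differentiating $\log$.

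Since the minimal catenoid gives zero, the strict negativity must come from the difference $g - \bar g$. Presumably $\alpha \le 0$ (the contradiction hypothesis) is used here, together with the sign of $q$ terms being dominated. I expect the main obstacle is this step. The perturbation by $(1+\alpha r^{2-n})\bar g$ with $\alpha \le 0$ contributes a term of size $|\alpha|\, r^{1-n}$ times a sign, and the $q$ terms are only $O(r^{1-n})$ with no sign. So the honest expectation is that one needs an additional margin. In Eichmair's construction, this margin comes from the near-vertical region $r$ close to $r_0$, where $|d\Upsilon| \to \infty$ and the Hessian term dominates. In the far region, $|b'| \sim \rho^{2-n}$ and $b'' \sim r_0^{-1}\rho^{1-n}$.

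Concretely, I would compute the conformal change. For $g = u^{4/(n-2)}\bar g$-type metrics, with the $O(r^{2-n-2\delta})$ remainder, the mean curvature of the graph changes by $-\frac{(n-1)}{2} \partial_\nu(\alpha r^{2-n}) \langle \nu, e_{n+1}\rangle$-type terms. These I would bound by $C r^{1-n}|b'|$ plus $O(r^{1-n-2\delta})$. Meanwhile, the $q$-terms are bounded by $\mathrm{tr}$ over the tangential projection, which is $\le C r^{1-n}$.

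To beat these, I would exploit the freedom that $r_0$ is large. The main term of the Hessian part near $r \approx r_0$ has size $b''(1+b'^2)^{-3/2}$, which is of order $r_0^{-1}$ times a factor degenerating as $\rho\to\infty$. Since this term vanishes exactly against the radial term, I would instead verify the inequality by directly estimating the perturbation terms in the projected norm $g^{ik}g^{jl}(g_{ij} - \dots)$. In the far region, the projection suppresses radial directions by $(1+b'^2)^{-1}$. If this balancing fails for general $q$, I would fall back on citing the explicit computation in Eichmair \cite{Eichmair3}, which this proposition quotes.
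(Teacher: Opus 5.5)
There is a genuine gap, and it starts with a computational error: for the given $b$, the graph of $b\circ r$ is \emph{not} minimal in Euclidean space. From $b'(s) = -(\rho^{2n-4}-1)^{-1/2}$ with $\rho = s/r_0$, you get $b'(1+b'^2)^{-1/2} = -r_0^{n-2}\,s^{2-n}$. Therefore
\[\frac{b''}{(1+b'^2)^{3/2}} + \frac{(n-1)\,b'}{s\,(1+b'^2)^{1/2}} = s^{1-n}\,\frac{d}{ds}\Big(s^{n-1}\,\frac{b'}{(1+b'^2)^{1/2}}\Big) = -r_0^{n-2}\,s^{1-n} < 0.\]
Your identity $b''/(1+b'^2) = -(n-1)\,b'/r$ is off by one. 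The correct relation is $b''/(1+b'^2) = -(n-2)\,b'/r$; a minimal catenoid would need the exponent $2n-2$ instead of $2n-4$. Equivalently, $b$ satisfies the ODE $s\,b'' + (n-1)(1+b'^2)\,b' = -(1+b'^2)^{3/2}\,r_0^{n-2}\,s^{2-n}$, which is what the paper uses.

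Having wrongly concluded that the main term vanishes, you look for a margin in the sign of $\alpha$ or in the near-vertical region $r \approx r_0$. Neither can work. The two inequalities together require the geometric term to lie below $-\bigl|\mathrm{tr}\,q\bigr|$-type quantities on all of $\{r>r_0\}$, including far from $r_0$. The $q$-terms are of size $C\,r^{1-n}$, have no sign, and have constants unrelated to $\alpha$. So the proposal never produces the strict inequality and ends by deferring to a citation. In fact the proposition uses nothing about the sign of $\alpha$.

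The missing mechanism is that the negative main term carries the factor $r_0^{n-2}$. Every error term, by contrast, is bounded by $C\,r^{1-n}$ with $C$ depending only on $(M,g,q)$, not on $r_0$. This bound is also uniform in $b'$, which matters because $|b'|\to\infty$ as $r\searrow r_0$.

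After substituting the ODE, the Hessian part of the operator applied to $\Upsilon$ splits into three pieces.
The main term is
\[-(1+b'^2)^{3/2}(1+b'^2|dr|_g^2)^{-3/2}\,r_0^{n-2}\,r^{1-n}\,|dr|_g^2 \le -r_0^{n-2}\,r^{1-n}\min\{|dr|_g^2,|dr|_g^{-1}\}.\]
The second term is $(n-1)(1+b'^2|dr|_g^2)^{-3/2}\,r^{-1}\,b'^3\,|dr|_g^2\,(|dr|_g^2-1)$.
The third term is $(1+b'^2|dr|_g^2)^{-1/2}\,r^{-1}\,b'\,\bigl(g_{ij} - b'^2(1+b'^2|dr|_g^2)^{-1}\partial_i r\,\partial_j r\bigr)\,T^{ij}$, where $T^{ij} = g^{ik}g^{jl}(r\,D^2_{k,l}r + \partial_k r\,\partial_l r - |dr|_g^2\,g_{kl})$ vanishes for $\bar g$.

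Since $(|b'|\,|dr|_g)^m(1+b'^2|dr|_g^2)^{-m/2}\le 1$ for $m=1,3$, the second and third terms are at most $C\,r^{-1}|dr|_g^{-1}\bigl(\bigl||dr|_g^2-1\bigr| + |T|_g\bigr)\le C\,r^{1-n}$, by the decay of $g-\bar g$. The $q$-term is at most $n\,|q|_g\le C\,r^{1-n}$. Since $|dr|_g\to 1$, taking $r_0$ large makes $r_0^{n-2}\min\{|dr|_g^2,|dr|_g^{-1}\}$ exceed these constants, which gives both inequalities at once.
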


\textbf{Proof.} 
The function $b$ satisfies the ODE 
\begin{equation} 
\label{ode.for.b}
s \, b''(s) + (n-1) \, (1+b'(s)^2) \, b'(s) = -(1+b'(s)^2)^{\frac{3}{2}} \, r_0^{n-2} \, s^{2-n} 
\end{equation}
for $s \in (r_0,\infty)$. The function $\Upsilon = b \circ r$ satisfies 
\begin{align*}
&g^{ik} \, g^{jl} \, \big ( g_{ij} - (1+|d\Upsilon|_g^2)^{-1} \, \partial_i \Upsilon \, \partial_j \Upsilon \big ) \, (1+|d\Upsilon|_g^2)^{-\frac{1}{2}} \, D_{k,l}^2 \Upsilon \\ 
&= (1+b'(r)^2 \, |dr|_g^2)^{-\frac{3}{2}} \, b''(r) \, |dr|_g^2 + (n-1) \, (1+b'(r)^2 \, |dr|_g^2)^{-\frac{1}{2}} \, r^{-1} \, b'(r) \, |dr|_g^2 \\ 
&+ (1+b'(r)^2 \, |dr|_g^2)^{-\frac{1}{2}} \, r^{-1} \, b'(r) \, \big ( g_{ij} - (1+b'(r)^2 \, |dr|_g^2)^{-1} \, b'(r)^2 \, \partial_i r \, \partial_j r \big ) \, T^{ij}, 
\end{align*} 
where 
\[T^{ij} = g^{ik} \, g^{jl} \, (r \, D_{k,l}^2 r + \partial_k r \, \partial_l r - |dr|_g^2 \, g_{kl}).\] 
Using (\ref{ode.for.b}), we obtain  
\begin{align*}
&g^{ik} \, g^{jl} \, \big ( g_{ij} - (1+|d\Upsilon|_g^2)^{-1} \, \partial_i \Upsilon \, \partial_j \Upsilon \big ) \, (1+|d\Upsilon|_g^2)^{-\frac{1}{2}} \, D_{k,l}^2 \Upsilon \\ 
&= -(1+b'(r)^2 \, |dr|_g^2)^{-\frac{3}{2}} \, (1+b'(r)^2)^{\frac{3}{2}} \, r_0^{n-2} \, r^{1-n} \, |dr|_g^2 \\ 
&+ (n-1) \, (1+b'(r)^2 \, |dr|_g^2)^{-\frac{3}{2}} \, r^{-1} \, b'(r)^3 \, |dr|_g^2 \, (|dr|_g^2-1) \\ 
&+ (1+b'(r)^2 \, |dr|_g^2)^{-\frac{1}{2}} \, r^{-1} \, b'(r) \, \big ( g_{ij} - b'(r)^2 \, (1+b'(r)^2 \, |dr|_g^2)^{-1} \, \partial_i r \, \partial_j r \big ) \, T^{ij}. 
\end{align*} 
Using the inequalities 
\[(1+b'(r)^2 \, |dr|_g^2)^{-\frac{3}{2}} \, (1+b'(r)^2)^{\frac{3}{2}} \geq \min \{1,|dr|_g^{-3}\},\] 
\[(1+b'(r)^2 \, |dr|_g^2)^{-1} \, b'(r)^2 \, |dr|_g^2 \leq 1,\] 
and 
\[\big | \big ( g_{ij} - b'(r)^2 \, (1+b'(r)^2 \, |dr|_g^2)^{-1} \, \partial_i r \, \partial_j r \big ) \, T^{ij} \big | \leq n \, |T|_g,\] 
we conclude that 
\begin{align*}
&g^{ik} \, g^{jl} \, \big ( g_{ij} - (1+|d\Upsilon|_g^2)^{-1} \, \partial_i \Upsilon \, \partial_j \Upsilon \big ) \, (1+|d\Upsilon|_g^2)^{-\frac{1}{2}} \, D_{k,l}^2 \Upsilon \\ 
&\leq -r_0^{n-2} \, r^{1-n} \, \min \{|dr|_g^2,|dr|_g^{-1}\} \\ 
&+ (n-1) \, r^{-1} \, |dr|_g^{-1} \, \big | |dr|_g^2-1 \big | + r^{-1} \, |dr|_g^{-1} \, n \, |T|_g. 
\end{align*} 
By assumption, $\big | |dr|_g^2 - 1 \big | \leq C \, r^{2-n}$ and $|T|_g \leq C \, r^{2-n}$. This implies 
\begin{align*}
&g^{ik} \, g^{jl} \, \big ( g_{ij} - (1+|d\Upsilon|_g^2)^{-1} \, \partial_i \Upsilon \, \partial_j \Upsilon \big ) \, (1+|d\Upsilon|_g^2)^{-\frac{1}{2}} \, D_{k,l}^2 \Upsilon \\ 
&\leq -r_0^{n-2} \, r^{1-n} \, \min \{|dr|_g^2,|dr|_g^{-1}\} + C \, r^{1-n}. 
\end{align*} 
Finally, 
\[\big | g^{ik} \, g^{jl} \, \big ( g_{ij} - (1+|d\Upsilon|_g^2)^{-1} \, \partial_i \Upsilon \, \partial_j \Upsilon \big ) \, q_{kl} \big | \leq n \, |q|_g \leq C \, r^{1-n}.\] 
Putting these facts together, the assertion follows. This completes the proof of Proposition \ref{barrier}. \\

In the following, we fix $r_0$ so that the conclusion of Proposition \ref{barrier} holds. 

\begin{proposition}
\label{bound.for.b}
We have $b(s) \leq 2 \, r_0^{n-2} \, s^{3-n}$ for $s \in (2r_0,\infty)$. 
\end{proposition}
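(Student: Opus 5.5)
We bound the integrand from above by a pure power of $t$ and then integrate explicitly. Substituting $t = r_0^{-1} \, \tau$ in the definition of $b$ gives
\[b(s) = r_0 \int_{r_0^{-1} s}^\infty (t^{2n-4}-1)^{-\frac{1}{2}} \, dt.\]
For $s > 2r_0$ the lower limit satisfies $r_0^{-1} s > 2$. Hence every $t$ in the range of integration has $t > 2$, and so $t^{2n-4} \geq 2^{2n-4} \geq 16$ because $n \geq 4$.

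The key elementary step is the pointwise bound $t^{2n-4} - 1 \geq \frac{1}{4} \, t^{2n-4}$ for $t \geq 2$. In fact the much stronger inequality $t^{2n-4}-1 \geq \frac{15}{16} \, t^{2n-4}$ holds, since $1 \leq \frac{1}{16} \, t^{2n-4}$. Taking square roots, this gives
\[(t^{2n-4}-1)^{-\frac{1}{2}} \leq 2 \, t^{2-n}.\]
Since $n \geq 4$, the exponent satisfies $2-n \leq -2 < -1$, so $t^{2-n}$ is integrable at infinity.

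Integrating,
\[b(s) \leq 2 \, r_0 \int_{r_0^{-1}s}^\infty t^{2-n} \, dt = \frac{2 \, r_0}{n-3} \, (r_0^{-1} s)^{3-n} = \frac{2}{n-3} \, r_0^{n-2} \, s^{3-n}.\]
Because $n \geq 4$, we have $\frac{2}{n-3} \leq 2$, which gives $b(s) \leq 2 \, r_0^{n-2} \, s^{3-n}$.

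There is no genuine obstacle in this argument. The only points needing care are that the constant from the integrand bound combines with the factor $\frac{1}{n-3}$ to stay at most $2$, and that the hypothesis $n \geq 4$ is used twice: once for convergence of the integral and once for the constant. The condition $s > 2r_0$ is used only to keep $t$ bounded away from $1$, where the integrand is singular.
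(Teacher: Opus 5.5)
Your proof is correct and follows the same route as the paper. The paper also bounds $t^{2n-4}-1$ from below by a multiple of $t^{2n-4}$ for $t>2$, integrates $t^{2-n}$ explicitly, and checks that the resulting constant is at most $2$ because $n \geq 4$. The only difference is that the paper uses the sharper factor $1-2^{4-2n}$ where you use $\frac{1}{4}$, and your cruder constant $\frac{2}{n-3}$ serves equally well.
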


\textbf{Proof.} 
Note that $t^{2n-4} - 1 \geq (1 - 2^{4-2n}) \, t^{2n-4}$ for $t \in (2,\infty)$. This implies 
\begin{align*} 
b(s) 
&= r_0 \int_{r_0^{-1} \, s}^\infty (t^{2n-4}-1)^{-\frac{1}{2}} \, dt \\ 
&\leq (1 - 2^{4-2n})^{-\frac{1}{2}} \, r_0 \int_{r_0^{-1} \, s}^\infty t^{2-n} \, dt \\ 
&= (n-3)^{-1} \, (1 - 2^{4-2n})^{-\frac{1}{2}} \, r_0^{n-2} \, s^{3-n} 
\end{align*}
for $s \in (2r_0,\infty)$. Finally, $(n-3)^{-1} \, (1 - 2^{4-2n})^{-\frac{1}{2}} \leq 2$ since $n \geq 4$. This completes the proof of Proposition \ref{bound.for.b}. \\

\subsection{The choice of $\kappa_0$, $\kappa_1$, $s_0$, $s_1$, and $\tau$} In this subsection, we fix various parameters that will be needed in the subsequent arguments. Let $r_0$ be chosen as in Proposition \ref{barrier}. We fix a smooth function $\zeta: M \to [0,1]$ with the property that $\zeta = 0$ on $\{r > 8r_0\}$ and $\zeta = 1$ on $M \setminus \{r > 4r_0\}$. By assumption, $\mu - |J|_g > 0$ at each point in $M$. Since the function $\zeta$ has compact support, we can find positive constants $\kappa_0$ and $\kappa_1$ such that 
\[\mu - |J|_g - \kappa_0^2 \, |d\zeta|_g^2 - \kappa_1 \, \zeta^2 \, n \, |q|_g > 0\] 
at each point in $M$. Consequently, we can find a positive smooth function $Q$ on $M$ such that 
\begin{equation} 
\label{choice.of.Q}
\mu - |J|_g - \kappa_0^2 \, |d\zeta|_g^2 - \kappa_1 \, \zeta^2 \, n \, |q|_g \geq Q 
\end{equation} 
at each point in $M$ and 
\begin{equation} 
\label{decay.of.Q}
|\bar{D}^m Q|_{\bar{g}} \leq C(m) \, r^{-n-m-2\delta} 
\end{equation}
for every nonnegative integer $m$. 

We next define $E_0 = \{r > 8r_0\}$. Since the function $Q$ is strictly positive at each point in $M$, we can find positive real numbers $s_0$ and $s_1$ such that $s_1 \geq s_0$ and 
\begin{equation} 
\label{choice.of.s_0.and.s_1}
Q(x) > \frac{128}{s_1s_0} 
\end{equation}
for each point $x \in \mathcal{N}_g(E_0,2s_0) \setminus E_0$. Having chosen $s_0$ and $s_1$ in this way, we fix $\tau>0$ sufficiently small so that 
\begin{equation} 
\label{tau} 
2^{10-3n} \, r_0 + s_1+2s_0 \leq \frac{1}{2} \, \min \{\kappa_0 \, \tau^{-1},\kappa_1 \, \tau^{-2}\}. 
\end{equation}

\subsection{A modified Jang equation} In this subsection, we construct a solution of a modified Jang equation. To that end, we consider a sequence of real numbers $r_j$ such that $r_j > 32r_0$ for each $j$ and $r_j \to \infty$ as $j \to \infty$. For each $j$, we define a compact domain $M^{(j)} \subset M$ by $M^{(j)} = M \setminus \{r > r_j\}$. We consider the following boundary value problem on $M^{(j)}$, which is inspired by work of Jang \cite{Jang}, Schoen-Yau \cite{Schoen-Yau3}, and Eichmair \cite{Eichmair3}.

\begin{definition}
Let $w$ be a smooth function on $M^{(j)}$, and let $\lambda \in [-1,1]$. We say that $w$ is a solution of $(\star_{\lambda,j})$ if $w$ solves the PDE 
\[g^{ik} \, g^{jl} \, \big ( g_{ij} - (1+|dw|_g^2)^{-1} \, \partial_i w \, \partial_j w \big ) \, \big ( (1+|dw|_g^2)^{-\frac{1}{2}} \, D_{k,l}^2 w - \lambda \, q_{kl} \big ) = \tau^2 \, \zeta^2 \, w\] 
in $M^{(j)}$ with Dirichlet boundary condition $w = 0$ on $\partial M^{(j)}$.
\end{definition}

The PDE above is different from the PDE studied in \cite{Schoen-Yau3} and \cite{Eichmair3} in that we include the cutoff function $\zeta^2$ on the right hand side of the equation. 

In the following, we prove the existence of a solution of $(\star_{1,j})$. The proof is standard, and follows the arguments of Schoen-Yau \cite{Schoen-Yau3}, Eichmair \cite{Eichmair1}, Eichmair-Metzger \cite{Eichmair-Metzger}, and Korevaar-Simon \cite{Korevaar-Simon}. We first establish $C^0$-estimates for solutions of $(\star_{\lambda,j})$.

\begin{lemma}
\label{C0.estimate.version.1}
Suppose that $w$ is a solution of $(\star_{\lambda,j})$ for some $\lambda \in [-1,1]$. Then $|w| \leq b(r) - b(r_j)$ on the domain $\{r_0 < r \leq r_j\}$. 
\end{lemma}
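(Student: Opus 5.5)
Throughout, $w$ is a solution of $(\star_{\lambda,j})$ for a fixed $\lambda \in [-1,1]$. The plan is a maximum principle comparison between $w$ and the barriers $\pm(\Upsilon - b(r_j))$ on the annular region $A = \{r_0 < r \leq r_j\}$. The key structural facts are the following.

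\begin{enumerate}
\item On $\{r > 4r_0\}$, hence on $\{r > 8r_0\}$, the right-hand side $\tau^2 \zeta^2 w$ is not identically zero, but $\zeta$ vanishes on $\{r > 8r_0\}$.
\item The operator on the left is invariant under adding a constant to $w$.
\item The barrier $\Upsilon = b \circ r$ blows up in gradient at $r = r_0$, because $b'(s) \to -\infty$ as $s \downarrow r_0$. This is what prevents a touching on the inner boundary $\{r = r_0\}$.
\end{enumerate}

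\textbf{Upper bound.} Let $\Phi = \Upsilon - b(r_j)$. Then $\Phi = 0 = w$ on $\{r = r_j\}$, and $\Phi > 0$ on $\{r_0 < r < r_j\}$, since $b$ is decreasing. Let
\[\mathcal{M}_\lambda(u) = g^{ik} \, g^{jl} \, \big ( g_{ij} - (1+|du|_g^2)^{-1} \, \partial_i u \, \partial_j u \big ) \, \big ( (1+|du|_g^2)^{-\frac{1}{2}} \, D_{k,l}^2 u - \lambda \, q_{kl} \big ).\]
Since $\mathcal{M}_\lambda$ is linear in $\lambda$ through the $q$ term, it is a convex combination of the two expressions in Proposition \ref{barrier}. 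Hence $\mathcal{M}_\lambda(\Phi) < 0$ on $\{r > r_0\}$ for every $\lambda \in [-1,1]$.

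Suppose $\sup_A (w - \Phi) > 0$. I would rule this out in three steps.

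\begin{itemize}
\item \emph{No interior maximum.} Suppose the maximum is attained at an interior point $x$ with $r_0 < r(x) < r_j$. There $dw = d\Phi$ and $D^2 w \leq D^2 \Phi$. Since the coefficient matrix is positive definite and the first-order terms agree, $\mathcal{M}_\lambda(w)(x) \leq \mathcal{M}_\lambda(\Phi)(x) < 0$. On the other hand, $w(x) > \Phi(x) > 0$, so the equation gives $\mathcal{M}_\lambda(w)(x) = \tau^2 \zeta^2 w(x) \geq 0$. This is a contradiction.
\item \emph{No maximum at the outer boundary.} The maximum cannot occur on $\{r = r_j\}$, because $w - \Phi = 0$ there.
\item \emph{No maximum near the inner boundary.} As $r \downarrow r_0$, $\Phi$ stays bounded since $b(r_0)$ is finite, while $w$ is smooth up to and across $\{r = r_0\}$. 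A maximum of $w - \Phi$ over the closure of $A$ at a point with $r = r_0$ is impossible. Near such a point, the radial derivative satisfies $\partial_r(w - \Phi) = \partial_r w - b'(r) \to +\infty$. So $w - \Phi$ strictly increases moving outward, and values slightly inside $A$ are larger.
\end{itemize}

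I expect this last step to be the main subtle point. I would handle it by working on $\{r_0 + \varepsilon \leq r \leq r_j\}$ and using that $b'(r_0 + \varepsilon) \to -\infty$ dominates the bounded quantity $|dw|$.

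\textbf{Lower bound.} Apply the same argument to $-w$. It solves $(\star_{-\lambda,j})$, because $\mathcal{M}_\lambda(-w) = -\mathcal{M}_{-\lambda}(w)$ and the right-hand side is odd in $w$. This gives $-w \leq \Phi$.

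Combining the two bounds, $|w| \leq b(r) - b(r_j)$ on $\{r_0 < r \leq r_j\}$.
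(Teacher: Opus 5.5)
Your proof is correct and follows the paper's argument. You compare $w$ with $\Upsilon - b(r_j)$, and handle $-w$, which solves $(\star_{-\lambda,j})$, the same way. You use $b'(s)\to-\infty$ as $s\searrow r_0$ to push the maximum of $w-\Upsilon$ into $\{r_0<r\le r_j\}$, exclude $\{r=r_j\}$ by the boundary condition, and at an interior maximum play the strict negativity from Proposition~\ref{barrier} against the sign of $\tau^2\zeta^2 w$; the one difference is phrasing, where you use $w>\Phi>0$ to force $\tau^2\zeta^2 w\ge 0$ and the paper uses $\tau^2\zeta^2 w<0$ to force $w<0\le\Phi$, which is the same contradiction.

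Your explicit convex-combination step for general $\lambda\in[-1,1]$ fills in something the paper leaves implicit. Your item (1) is garbled, since it calls the right-hand side nonzero on $\{r>8r_0\}$ where $\zeta=0$, but the argument never uses it.
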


\textbf{Proof.} 
As above, we define a positive function $\Upsilon$ on the domain $\{r > r_0\}$ by $\Upsilon = b \circ r$, where $r$ denotes the radial coordinate near infinity. It suffices to show that 
\begin{equation}
\label{upper.bound.for.w-Upsilon}
\sup_{\{r_0 < r \leq r_j\}} (w-\Upsilon) \leq -b(r_j) 
\end{equation} 
and 
\begin{equation}
\label{lower.bound.for.w+Upsilon}
\inf_{\{r_0 < r \leq r_j\}} (w+\Upsilon) \geq b(r_j). 
\end{equation}
To prove the inequality (\ref{upper.bound.for.w-Upsilon}), we argue by contradiction. Suppose that 
\[\sup_{\{r_0 < r \leq r_j\}} (w-\Upsilon) > -b(r_j).\] 
Since $\lim_{s \searrow r_0} b'(s) = -\infty$, we can find a point $\bar{x} \in \{r_0 < r \leq r_j\}$ such that 
\begin{equation} 
\label{maximum.of.w-Upsilon}
w(\bar{x}) - \Upsilon(\bar{x}) = \sup_{\{r_0 < r \leq r_j\}} (w-\Upsilon) > -b(r_j). 
\end{equation}
Using (\ref{maximum.of.w-Upsilon}) and the fact that $w = 0$ on $\partial M^{(j)} = \{r = r_j\}$, we deduce that $\bar{x} \in \{r_0 < r < r_j\}$. Consequently, $dw = d\Upsilon$ and $D^2 w \leq D^2 \Upsilon$ at the point $\bar{x}$. This implies 
\begin{align*}
&g^{ik} \, g^{jl} \, \big ( g_{ij} - (1+|dw|_g^2)^{-1} \, \partial_i w \, \partial_j w \big ) \, \big ( (1+|dw|_g^2)^{-\frac{1}{2}} \, D_{k,l}^2 w - \lambda \, q_{kl} \big ) \\ 
&\leq g^{ik} \, g^{jl} \, \big ( g_{ij} - (1+|d\Upsilon|_g^2)^{-1} \, \partial_i \Upsilon \, \partial_j \Upsilon \big ) \, \big ( (1+|d\Upsilon|_g^2)^{-\frac{1}{2}} \, D_{k,l}^2 \Upsilon - \lambda \, q_{kl} \big )
\end{align*} 
at the point $\bar{x}$. The expression on the right hand side is strictly negative by Proposition \ref{barrier}. Since $w$ is a solution of $(\star_{\lambda,j})$, it follows that $\tau^2 \, \zeta^2 \, w(\bar{x}) < 0$. Therefore, $w(\bar{x}) < 0$. Since $\Upsilon(\bar{x}) \geq b(r_j)$, we conclude that $w(\bar{x}) - \Upsilon(\bar{x}) < -b(r_j)$. This contradicts (\ref{maximum.of.w-Upsilon}). This proves (\ref{upper.bound.for.w-Upsilon}). The inequality (\ref{lower.bound.for.w+Upsilon}) follows by replacing $w$ by $-w$ and $\lambda$ by $-\lambda$. This completes the proof of Lemma \ref{C0.estimate.version.1}. \\

\begin{remark}
The estimate $|w| \leq b(r) - b(r_j)$ in Lemma \ref{C0.estimate.version.1} holds with equality along the boundary $\partial M^{(j)}$. In particular, $|dw|_g \leq |b'(r_j)| \, |dr|_g$ at each point on the boundary $\partial M^{(j)}$. 
\end{remark}

\begin{lemma}
\label{C0.estimate.version.2}
Suppose that $w$ is a solution of $(\star_{\lambda,j})$ for some $\lambda \in [-1,1]$. Then $|w| \leq 2 \, r_0^{n-2} \, r^{3-n}$ on the domain $\{2r_0 < r \leq r_j\}$. 
\end{lemma}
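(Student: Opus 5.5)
This follows by combining Lemma \ref{C0.estimate.version.1} with Proposition \ref{bound.for.b}. Let $x$ be a point in $\{2r_0 < r \leq r_j\}$. Since $2r_0 > r_0$, this point lies in the domain where Lemma \ref{C0.estimate.version.1} applies, so
\[|w(x)| \leq b(r(x)) - b(r_j).\]

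The function $b$ is positive on $(r_0,\infty)$ by definition. Hence $b(r_j) > 0$, and we may drop the subtracted term to get $|w(x)| \leq b(r(x))$.

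Since $r(x) \in (2r_0,\infty)$, Proposition \ref{bound.for.b} gives
\[b(r(x)) \leq 2 \, r_0^{n-2} \, r(x)^{3-n}.\]
Combining the two inequalities yields $|w| \leq 2 \, r_0^{n-2} \, r^{3-n}$ on $\{2r_0 < r \leq r_j\}$, as claimed.

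There is no real obstacle here. The only points to check are that $\{2r_0 < r \leq r_j\}$ is contained in $\{r_0 < r \leq r_j\}$, which holds because $r_j > 32 r_0$, and that $b(r_j)$ is nonnegative, which holds because the integrand in the definition of $b$ is positive.
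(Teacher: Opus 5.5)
Your proof is correct and is exactly the paper's argument: the bound $|w| \leq b(r) - b(r_j) \leq b(r)$ from Lemma \ref{C0.estimate.version.1}, using $b > 0$, combined with Proposition \ref{bound.for.b} on $(2r_0,\infty)$. One small point: the containment $\{2r_0 < r \leq r_j\} \subset \{r_0 < r \leq r_j\}$ follows from $r_0 > 0$ alone, not from $r_j > 32r_0$, but this is harmless.
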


\textbf{Proof.} 
This follows by combining Proposition \ref{bound.for.b} and Lemma \ref{C0.estimate.version.1}. \\

\begin{lemma}
\label{C0.estimate.version.3} 
Suppose that $w$ is a solution of $(\star_{\lambda,j})$ for some $\lambda \in [-1,1]$. Then 
\[\sup_{M^{(j)}} |w| \leq \max \Big \{ 2^{4-n} \, r_0,\tau^{-2} \sup_M n \, |q|_g \Big \}.\]
\end{lemma}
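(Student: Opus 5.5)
We will run a maximum principle argument at an interior extremum of $w$, splitting into cases according to where the extremum lies. It suffices to bound $\sup_{M^{(j)}} w$; the bound for $\inf w$ follows by replacing $w$ by $-w$ and $\lambda$ by $-\lambda$, which preserves the class of solutions of $(\star_{\lambda,j})$ with $\lambda \in [-1,1]$. Let $\bar x \in M^{(j)}$ be a point where $w$ attains its maximum. We may assume $w(\bar x) > 0$. Since $w = 0$ on $\partial M^{(j)}$, the point $\bar x$ then lies in the interior of $M^{(j)}$.

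\emph{Case 1: $r(\bar x) > 8r_0$, i.e.\ $\bar x \in E_0$.} Here Lemma \ref{C0.estimate.version.2} applies directly and gives
\[w(\bar x) \leq 2\, r_0^{n-2}\, r(\bar x)^{3-n} \leq 2\, r_0^{n-2} (8r_0)^{3-n} = 2^{10-3n}\, r_0 \leq 2^{4-n}\, r_0.\]
This case could even be handled for any $\bar x$ with $r(\bar x) > 2r_0$, since then $2\,r_0^{n-2}\,(2r_0)^{3-n} = 2^{4-n}\,r_0$. This explains the constant $2^{4-n}\,r_0$ in the statement.

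\emph{Case 2: $\bar x \in M \setminus \{r > 2r_0\}$.} Here $\zeta(\bar x) = 1$. At the interior maximum we have $dw = 0$ and $D^2 w \leq 0$, so the left-hand side of the equation becomes
\[g^{kl}\, D^2_{k,l} w - \lambda\, \mathrm{tr}_g(q) \leq |\lambda|\, |\mathrm{tr}_g q| \leq \sqrt{n}\, |q|_g \leq n\, |q|_g.\]
The equation then yields $\tau^2\, w(\bar x) \leq n\, |q|_g(\bar x)$, that is, $w(\bar x) \leq \tau^{-2} \sup_M n\,|q|_g$.

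The only real bookkeeping is the intermediate region $2r_0 < r \leq 8r_0$, where $\zeta$ may vanish or be small. The plan is not to use the PDE there at all but to rely on Lemma \ref{C0.estimate.version.2}, as noted in Case 1: that lemma is valid on all of $\{2r_0 < r \leq r_j\}$, and $r_j > 32r_0$, so this region lies inside $M^{(j)}$. The main care needed is therefore just that the two regions $\{r > 2r_0\}$ (handled by Lemma \ref{C0.estimate.version.2}) and $\{r \leq 2r_0\} \subset \{\zeta = 1\}$ (handled by the maximum principle) cover $M^{(j)}$, which holds because $\zeta = 1$ on $M \setminus \{r > 4r_0\}$. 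Combining the two cases gives the stated bound for $\sup w$, and by symmetry for $\sup |w|$.
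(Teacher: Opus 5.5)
Your proof is correct and is essentially the paper's argument: Lemma~\ref{C0.estimate.version.2} bounds $w$ by $2^{4-n}\,r_0$ on $\{2r_0<r\le r_j\}$. A positive maximum lying outside that region sits where $\zeta=1$, so $dw=0$, $D^2w\le 0$ and the equation give $\tau^2 w(\bar x)\le n\,|q|_g$. The lower bound then follows from the symmetry $(w,\lambda)\mapsto(-w,-\lambda)$; your initial restriction of Case~1 to $r>8r_0$ is simply redundant once you extend it to $r>2r_0$.
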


\textbf{Proof.} 
It suffices to show that 
\begin{equation} 
\label{upper.bound.for.w}
\sup_{M^{(j)}} w \leq \max \Big \{ 2^{4-n} \, r_0,\tau^{-2} \sup_M n \, |q|_g \Big \}. 
\end{equation} 
and 
\begin{equation} 
\label{lower.bound.for.w}
\inf_{M^{(j)}} w \geq -\max \Big \{ 2^{4-n} \, r_0,\tau^{-2} \sup_M n \, |q|_g \Big \}. 
\end{equation} 
To prove the inequality (\ref{upper.bound.for.w}), we argue by contradiction. Suppose that 
\[\sup_{M^{(j)}} w > \max \Big \{ 2^{4-n} \, r_0,\tau^{-2} \sup_M n \, |q|_g \Big \}.\] 
We can find a point $\bar{x} \in M^{(j)}$ such that 
\begin{equation} 
\label{maximum.of.w}
w(\bar{x}) = \sup_{M^{(j)}} w > \max \Big \{ 2^{4-n} \, r_0,\tau^{-2} \sup_M n \, |q|_g \Big \}. 
\end{equation} 
Using (\ref{maximum.of.w}) and Lemma \ref{C0.estimate.version.2}, we deduce that $\bar{x} \in M^{(j)} \setminus \{2r_0 < r \leq r_j\}$. Consequently, $\zeta = 1$ at the point $\bar{x}$. Moreover, $dw = 0$ and $D^2 w \leq 0$ at the point $\bar{x}$. This implies 
\[g^{ik} \, g^{jl} \, \big ( g_{ij} - (1+|dw|_g^2)^{-1} \, \partial_i w \, \partial_j w \big ) \, \big ( (1+|dw|_g^2)^{-\frac{1}{2}} \, D_{k,l}^2 w - \lambda \, q_{kl} \big ) \leq n \, |q|_g\] 
at the point $\bar{x}$. Since $w$ is a solution of $(\star_{\lambda,j})$, it follows that $\tau^2 \, w \leq n \, |q|_g$ at the point $\bar{x}$. This contradicts (\ref{maximum.of.w}). This proves (\ref{upper.bound.for.w}). The inequality (\ref{lower.bound.for.w}) follows by replacing $w$ by $-w$ and $\lambda$ by $-\lambda$. This completes the proof of Lemma \ref{C0.estimate.version.3}. \\

In the next step, we establish $C^1$-estimates for solutions of $(\star_{\lambda,j})$. 

\begin{lemma}
\label{gradient.estimate.outer.region}
Suppose that $w$ is a solution of $(\star_{\lambda,j})$ for some $\lambda \in [-1,1]$. Then $|dw|_g \leq C$ in the region $\{\frac{1}{2} \, r_j < r \leq r_j\}$. Here, $C$ is a large constant that is independent of $j$.
\end{lemma}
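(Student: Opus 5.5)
The plan is to work in the region $\{\frac{1}{4}\, r_j < r \leq r_j\}$ and rescale it to unit size. Since $r_j > 32 r_0$, this region lies in $\{r > 8r_0\}$, where $\zeta = 0$. There the equation $(\star_{\lambda,j})$ reduces to the Jang-type equation
\[g^{ik}\, g^{jl}\, \big(g_{ij} - (1+|dw|_g^2)^{-1}\, \partial_i w\, \partial_j w\big)\, \big((1+|dw|_g^2)^{-\frac12}\, D^2_{k,l} w - \lambda\, q_{kl}\big) = 0,\]
which contains no zeroth-order term.

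\textbf{Rescaling.} Define $\tilde w(y) = r_j^{-1}\, w(r_j y)$, the metric $\tilde g(y) = (g)(r_j y)$ in the rescaled coordinates, and $\tilde q(y) = r_j\, q(r_j y)$ (as a tensor, pulled back with the scaling). The equation is scale-invariant in the sense that $\tilde w$ solves the same equation with respect to $\tilde g$ and $\tilde q$ on the annulus $A = \{\frac14 < |y| \leq 1\}$. Moreover $|d\tilde w|_{\tilde g} = |dw|_g$.

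By the decay assumptions, $\tilde g$ is $C^k$-close to the Euclidean metric on $A$, uniformly in $j$. Also $|\tilde q| \leq C\, r_j^{2-n}$, again with all derivatives bounded uniformly. By Lemma~\ref{C0.estimate.version.2}, $|\tilde w| \leq C\, r_j^{2-n} \leq C$ on $A$. So all data are uniformly controlled, and it suffices to bound $v = \sqrt{1+|d\tilde w|^2_{\tilde g}}$ on $\{\frac12 \leq |y| \leq 1\}$ by a constant independent of $j$.

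\textbf{Boundary bound.} On the outer boundary $|y|=1$, the Remark after Lemma~\ref{C0.estimate.version.1} gives $|dw|_g \leq |b'(r_j)|\, |dr|_g$. This is bounded, indeed small, since $b'(s) \to 0$ as $s \to \infty$.

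\textbf{Interior estimate up to the boundary.} To pass from the boundary to the whole of $\{\frac12 \leq |y| \leq 1\}$, I will use the maximum principle technique of Korevaar--Simon, as in Schoen--Yau and Eichmair--Metzger. Consider the function $\eta\, v$, where the cutoff $\eta = \eta(|y|, \tilde w)$ has the following properties:
\begin{itemize}
\item $\eta$ vanishes on $|y| = \frac14$;
\item $\eta$ is bounded below by a positive constant on $|y| \geq \frac12$;
\item $\eta$ may depend on $\tilde w$ in the Korevaar manner, e.g.\ $\eta = e^{K \phi}$ with $\phi = (\text{radial cutoff}) + \tilde w$, or the corresponding $\phi_+$ construction.
\end{itemize}
If the maximum of $\eta v$ occurs at an outer boundary point, the boundary bound above controls it. Otherwise it occurs at an interior point of $A$. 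There we compute the linearized operator applied to $v$, using the Jacobi-type identity for the graph's unit normal. This gives an inequality of the form
\[\Delta_\Sigma v \geq |A_\Sigma|^2\, v - C\, v^2 \text{-type error terms},\]
where $\Sigma$ is the graph of $\tilde w$ in $A \times \mathbb{R}$ and the errors come from the curvature of $\tilde g$ and from $\tilde q$ and its derivatives. Combined with the equation for $\tilde w$ restricted to $\Sigma$, this forces $v$ to be bounded in terms of $K$ and the uniform bounds on $\tilde g$, $\tilde q$, and $\tilde w$. With $K$ fixed large, this bounds $\eta v$, and hence $v$ on $\{|y| \geq \frac12\}$. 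Undoing the scaling gives $|dw|_g \leq C$ on $\{\frac12\, r_j < r \leq r_j\}$.

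\textbf{Main obstacle.} I expect the hardest step to be the Korevaar computation itself. It has to be carried out on a non-flat background, with the extra $q$-term in the equation. One must check that every error term is dominated, namely terms involving $\nabla q$, the ambient Ricci curvature, and the non-constant $|dr|$. The key point is that after rescaling these errors are uniformly bounded (in fact small), so the classical argument for graphs of bounded mean curvature in a nearly Euclidean annulus goes through with constants independent of $j$. If this becomes unwieldy, the fallback is to quote the interior gradient estimate of Schoen--Yau/Eichmair for Jang graphs, which is valid since $\tilde w$ has bounded oscillation, and to obtain the estimate near $|y| = 1$ from the boundary barrier of the Remark together with a standard boundary-gradient argument.
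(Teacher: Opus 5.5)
Your argument is correct in outline, but it treats the boundary $\{r=r_j\}$ differently from the paper. The paper never rescales by $r_j$ and never uses the boundary gradient bound from the Remark. For each $p$ with $\frac12 r_j<r(p)<r_j$, it takes the Euclidean ball of radius $\sigma=\frac12(r_j-r(p))$ and notes that the barrier $b(r)-b(r_j)$ forces $|w|\le C_0\sigma$ on that ball, since $w$ vanishes linearly at $\partial M^{(j)}$. It then applies the scale-invariant estimate of Proposition~\ref{interior.gradient.estimate} with $\psi=2C_0\sigma^{-1}(2\,d_{\bar g}(p,\cdot)^2-\sigma^2)$. Because $w<\psi$ on the sphere while $w-\psi\ge C_0\sigma$ at $p$, this bounds $|dw|_g(p)$ independently of $\sigma$ and $j$. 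Balls shrinking toward the boundary are therefore handled purely by scaling.

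You instead work at the single scale $r_j$ and let the Korevaar cutoff vanish only on the inner sphere. The maximum of $\eta v$ may then sit on $\{|y|=1\}$, where the Remark controls it. This is the classical ``boundary gradient plus global maximum principle'' scheme, in which the barrier enters through its first-order consequence rather than its $C^0$ form. The paper's version has the advantage that one estimate, with the clean hypothesis $w<\psi$ on $\partial\Omega$, also serves Lemma~\ref{gradient.estimate.inner.region}, and there is no boundary case to analyze. Yours needs a small modification of Proposition~\ref{interior.gradient.estimate} to allow boundary maxima, but is otherwise just as short.

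Two cautions. First, the computation you flag as the main obstacle is exactly the proof of Proposition~\ref{interior.gradient.estimate}. That proof uses only that $\bar x$ is an interior maximum, so it transfers verbatim to your interior case. However, there the error terms are of order $v$ in the inequality for $v$ (equivalently $O(v^{-1})$ for $v^{-1}$). This is because the $q\ast D^2w$ terms are absorbed into the drift $S^{ij}$, and $q$ does not depend on the vertical variable, which gains a factor $v^{-1}$ in the $Dq$ term. Errors genuinely of order $v^2$, as your sketch suggests, would not be dominated by the $K^2$ term, so this bookkeeping matters.

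Second, your fallback would not work as stated. Interior estimates on a fixed region degenerate as $|y|\to 1$, and a boundary gradient bound controls $|dw|_g$ only on the boundary itself. Closing that gap is precisely what your global maximum principle, or the paper's shrinking balls, accomplishes.
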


\textbf{Proof.} 
Let $d_{\bar{g}}$ denote the distance function with respect to the Euclidean metric $\bar{g}$. Given a point $p$ in the region $\{\frac{1}{2} \, r_j < r < r_j\}$, we consider the Euclidean ball $\mathcal{B}_{\bar{g}}(p,\sigma)$, where $\sigma = \frac{1}{2} \, d_{\bar{g}}(p,\partial M^{(j)}) = \frac{1}{2} \, (r_j - r(p))$. Lemma \ref{C0.estimate.version.1} implies that $\sup_{\mathcal{B}_{\bar{g}}(p,\sigma)} |w| \leq C_0 \sigma$ for some uniform constant $C_0$. We now apply Proposition \ref{interior.gradient.estimate} with $\Omega = \mathcal{B}_{\bar{g}}(p,\sigma)$ and 
\[\psi = 2C_0 \sigma^{-1} \, (2 \, d_{\bar{g}}(p,\cdot)^2 - \sigma^2).\] 
Thus, we conclude that $|dw|_g \leq C$ at the point $p$, where $C$ is a large constant that is independent of $\sigma$ and $j$. This completes the proof of Lemma \ref{gradient.estimate.outer.region}. \\

\begin{lemma}
\label{gradient.estimate.inner.region}
Suppose that $w$ is a solution of $(\star_{\lambda,j})$ for some $\lambda \in [-1,1]$. Then $|dw|_g \leq C$ in the region $M^{(j)} \setminus \{\frac{1}{2} \, r_j < r \leq r_j\}$. Here, $C$ is a large constant that is independent of $j$.
\end{lemma}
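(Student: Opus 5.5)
The region $M^{(j)} \setminus \{\frac{1}{2} \, r_j < r \leq r_j\} = M \setminus \{r > \frac{1}{2} \, r_j\}$ has two parts: a compact piece $\{r \leq 32 r_0\}$, plus the annulus $\{32 r_0 < r \leq \frac{1}{2} \, r_j\}$. I plan to cover both by local interior gradient estimates, using the same interior gradient estimate (Proposition \ref{interior.gradient.estimate}) as in the proof of Lemma \ref{gradient.estimate.outer.region}. The required $C^0$ input comes from Lemma \ref{C0.estimate.version.3} and Lemma \ref{C0.estimate.version.2}. What makes this work is that $w$ is bounded independently of $j$ on every ball we use, and each ball has a definite size.

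\emph{Asymptotic annulus.} Take a point $p$ with $32 r_0 \leq r(p) \leq \frac{1}{2} \, r_j$, and set $\sigma = \frac{1}{4} \, r(p)$. The Euclidean ball $\mathcal{B}_{\bar{g}}(p,\sigma)$ then lies in $\{2r_0 < r < r_j\}$. On this ball, Lemma \ref{C0.estimate.version.2} gives $|w| \leq 2 \, r_0^{n-2} \, r^{3-n} \leq C \sigma^{3-n} \leq C_0 \sigma$, with $C_0$ independent of $j$ and $p$ because $\sigma \geq 8 r_0$. I then apply Proposition \ref{interior.gradient.estimate} with $\Omega = \mathcal{B}_{\bar{g}}(p,\sigma)$ and the same test function $\psi = 2C_0 \sigma^{-1} (2 \, d_{\bar{g}}(p,\cdot)^2 - \sigma^2)$. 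This choice makes $\psi < -|w|$ near $p$ and $\psi > |w|$ near $\partial \Omega$ with controlled scale-invariant derivatives, exactly as in the outer region. Thus $|dw|_g(p) \leq C$.

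I also need the estimate to be uniform in $\sigma$, which may be large. The coefficients of the equation in the rescaled picture decay, since $\sigma |q| \lesssim \sigma^{2-n}$ and $\sigma^2 \tau^2 \zeta^2 w = 0$ because $\zeta = 0$ there. So the scale-invariant hypotheses of Proposition \ref{interior.gradient.estimate} hold uniformly, just as they did in Lemma \ref{gradient.estimate.outer.region}.

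\emph{Compact core.} On $\{r \leq 32 r_0\}$, which is a fixed compact set independent of $j$ (recall $r_j > 32 r_0$), Lemma \ref{C0.estimate.version.3} gives $\sup |w| \leq C$ independent of $j$. I cover this set by finitely many coordinate balls $\mathcal{B}(p,\sigma_*)$ of a fixed radius $\sigma_*$. The doubled balls lie in $M^{(j)}$ and away from $\partial M^{(j)}$; for points near $r = 32 r_0$ I use balls in the asymptotic chart. On each ball I apply Proposition \ref{interior.gradient.estimate} with $\psi = 2C\sigma_*^{-2}(2 \, d(p,\cdot)^2 - \sigma_*^2)$, or a constant multiple of it chosen so that $\psi$ dominates $\pm w$ appropriately.

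Here the right-hand side $\tau^2 \zeta^2 w$ is bounded by $\tau^2 C$, and its gradient involves $dw$ linearly with small coefficient. I expect that Proposition \ref{interior.gradient.estimate}, following Korevaar--Simon, allows such a lower-order term. The constants depend only on the geometry of the compact set, on $\tau$, $\zeta$, $q$, and on the $C^0$ bound, so they are independent of $j$.

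\emph{Main obstacle.} The main obstacle is checking that the hypotheses of Proposition \ref{interior.gradient.estimate} hold uniformly. In particular, the term $\tau^2 \zeta^2 w$ on the right-hand side must be admissible, with bounds independent of $j$. In the compact region this relies on $\sup|w|$ being $j$-independent, which is exactly Lemma \ref{C0.estimate.version.3}. In the annulus it relies on $\zeta$ vanishing there. Combining the two regions yields the lemma.
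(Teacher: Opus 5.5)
Your proof is correct, but it is organized differently from the paper's.

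\textbf{The paper's route.} The paper does not split the region. It fixes one radius $\sigma < \frac{1}{2}\,\mathrm{inj}(M,g)$ and uses the global, $j$-independent bound $\sup_{M^{(j)}}|w| \leq C_1\sigma$ from Lemma \ref{C0.estimate.version.3}. It then applies Proposition \ref{interior.gradient.estimate} on $\mathcal{B}_g(p,\sigma)$, with $\psi = 2C_1\sigma^{-1}(2\,d_g(p,\cdot)^2-\sigma^2)$, at every point $p$ of the region. Uniformity comes from the bounded geometry of $(M,g)$ and from $q$, $Dq$, $\zeta$, $d\zeta$ being bounded.

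\textbf{Your route.} Your compact-core step is exactly this argument; your $\psi$ coincides with the paper's once $C_1 = C/\sigma_*$. In the far region you use balls of radius $\frac14 r(p)$ instead, as in Lemma \ref{gradient.estimate.outer.region}. You rely on decay and on $\zeta = 0$ to check the scale-invariant hypotheses, and you do this correctly: $\sigma^2|\mathrm{Ric}|$, $\sigma|q|$ and $\sigma^2|Dq|$ are all $O(r^{2-n})$.

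\textbf{Comparison.} Your version costs a decomposition and a second round of hypothesis checks. It gains one thing: in the noncompact part you never need a uniform lower bound on the injectivity radius or Hessian control of $d_g^2$, because the Euclidean distance has an explicit Hessian. The paper's route is shorter. You also do not need Lemma \ref{C0.estimate.version.2} in the annulus, since the global bound already gives $|w| \leq C \leq C_0\sigma$ once $\sigma \geq 8r_0$.

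\textbf{Two small corrections.}
\begin{enumerate}
\item You need not hedge about the right-hand side. Proposition \ref{interior.gradient.estimate} already contains the term $\zeta^2 w$ explicitly, so you apply it with $\tau\zeta$ in place of $\zeta$ and $\lambda q$ in place of $q$.
\item Only $r_j > 32r_0$ is assumed, so $\frac12 r_j$ may be smaller than $32r_0$. Your core should therefore be $M\setminus\{r>\min\{32r_0,\frac12 r_j\}\}$. For such points the distance to $\partial M^{(j)}$ is still bounded below independently of $j$, since the Euclidean distance is at least $\frac12 r_j > 16r_0$. So balls of a fixed small radius stay inside $M^{(j)}$.
\end{enumerate}
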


\textbf{Proof.}
Let us fix a positive real number $\sigma$ such that $\sigma < \frac{1}{2} \, \text{\rm inj}(M,g)$. It follows from Lemma \ref{C0.estimate.version.3} that $\sup_{M^{(j)}} |w| \leq C_1 \sigma$, where 
\[C_1 = \sigma^{-1} \, \max \Big \{ 2^{4-n} \, r_0,\tau^{-2} \sup_M n \, |q|_g \Big \}.\]
We now apply Proposition \ref{interior.gradient.estimate} with $\Omega = \mathcal{B}_g(p,\sigma)$ and 
\[\psi = 2C_1 \sigma^{-1} \, (2 \, d_g(p,\cdot)^2 - \sigma^2).\] 
Thus, we conclude that $|dw|_g \leq C$ at the point $p$, where $C$ is a large constant that is independent of $j$. This completes the proof of Lemma \ref{gradient.estimate.inner.region}. \\

\begin{proposition}
\label{existence.of.a.solution.for.each.j}
For each $j$, the boundary value problem $(\star_{1,j})$ has a smooth solution $u^{(j)}$. Moreover, $|u^{(j)}| \leq 2 \, r_0^{n-2} \, r^{3-n}$ on the domain $\{2r_0 < r \leq r_j\}$. 
\end{proposition}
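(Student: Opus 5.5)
The plan is to obtain $u^{(j)}$ by the Leray--Schauder continuity method in the parameter $\lambda \in [0,1]$, using the a priori estimates already established. For $\lambda = 0$, the function $w = 0$ solves $(\star_{0,j})$. Since the right-hand side $\tau^2 \zeta^2 w$ is nondecreasing in $w$, the linearized operator satisfies the maximum principle, so the linearization at any solution is invertible. This gives both uniqueness at $\lambda=0$ and openness of the set of parameters $\lambda$ for which $(\star_{\lambda,j})$ has a smooth solution.

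\textbf{Step 1: uniform estimates.} For $\lambda \in [0,1]$ and a fixed $j$, Lemma \ref{C0.estimate.version.3} gives a uniform $C^0$ bound on any solution of $(\star_{\lambda,j})$. Lemmas \ref{gradient.estimate.outer.region} and \ref{gradient.estimate.inner.region} together give a uniform gradient bound on $M^{(j)}$; the outer-region lemma already covers the boundary $\partial M^{(j)}$, via the barrier $b(r)-b(r_j)$ noted in the Remark.

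\textbf{Step 2: higher regularity.} With $|dw|_g$ bounded, the equation is uniformly elliptic and quasilinear, with coefficients depending smoothly on $x$, $w$ and $dw$. Ladyzhenskaya--Ural'tseva (De Giorgi--Nash type) estimates up to the smooth boundary $\{r=r_j\}$ then give a $C^{1,\beta}$ bound. Schauder theory bootstraps this to uniform $C^{2,\beta}$ bounds, and to $C^k$ bounds, independent of $\lambda$. By Arzel\`a--Ascoli, the set of $\lambda$ for which a solution exists is closed. Openness follows from the implicit function theorem, using invertibility of the linearization: its zeroth-order term is $-\tau^2\zeta^2 \leq 0$, so there is no kernel under the Dirichlet condition. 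Connectedness of $[0,1]$ then yields a solution at $\lambda=1$. Alternatively, one can apply the Leray--Schauder fixed point theorem (e.g.\ Gilbarg--Trudinger, Theorem 11.4) directly, with the family parametrized by $\lambda$.

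\textbf{Step 3: the decay bound.} The bound $|u^{(j)}| \leq 2 r_0^{n-2} r^{3-n}$ on $\{2r_0<r\le r_j\}$ is immediate from Lemma \ref{C0.estimate.version.2} with $\lambda=1$.

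The main obstacle is Step 2, specifically the boundary $C^{1,\beta}$ estimate needed to get compactness. I would handle it via the standard theory for quasilinear equations once the global gradient bound from Step 1 is available.
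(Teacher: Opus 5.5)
Your proposal is correct and follows essentially the same continuity-method argument as the paper. The ingredients match: the $C^0$ bound from Lemma \ref{C0.estimate.version.3}, the gradient bound up to $\partial M^{(j)}$ from Lemmas \ref{gradient.estimate.outer.region} and \ref{gradient.estimate.inner.region}, Ladyzhenskaya--Ural'tseva boundary estimates (Theorem 13.7 in Gilbarg--Trudinger) plus Schauder theory for closedness, and invertibility of the linearization (zeroth-order term $-\tau^2\zeta^2 \leq 0$, Theorem 6.14 in Gilbarg--Trudinger) for openness, starting from $w=0$ at $\lambda=0$, with Lemma \ref{C0.estimate.version.2} giving the decay bound; running $\lambda$ over $[0,1]$ instead of the paper's $[-1,1]$ makes no difference.
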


\textbf{Proof.} 
Let us fix an integer $j$. Lemma \ref{C0.estimate.version.3} gives a $C^0$-estimate for solutions of $(\star_{\lambda,j})$. Using Lemma \ref{gradient.estimate.outer.region} and Lemma \ref{gradient.estimate.inner.region}, we obtain a $C^1$-estimate up to the boundary of $M^{(j)}$ for solutions of $(\star_{\lambda,j})$. Standard elliptic theory (see e.g. Theorem 13.7 in \cite{Gilbarg-Trudinger}) gives bounds for the higher derivatives of solutions of $(\star_{\lambda,j})$. Therefore, the set 
\[\{\lambda \in [-1,1]: \text{\rm $(\star_{\lambda,j})$ has a solution}\}\] 
is a closed subset of $[-1,1]$. On the other hand, it follows from Theorem 6.14 in \cite{Gilbarg-Trudinger} that, for every solution of $(\star_{\lambda,j})$, the linearized operator is invertible. Consequently, the set 
\[\{\lambda \in [-1,1]: \text{\rm $(\star_{\lambda,j})$ has a solution}\}\] 
is an open subset of $[-1,1]$. Since the function $w=0$ is a solution of $(\star_{0,j})$, we conclude that $(\star_{\lambda,j})$ has a solution for each $\lambda \in [-1,1]$. This completes the proof of Proposition \ref{existence.of.a.solution.for.each.j}. \\

\begin{proposition}
\label{existence}
We can find a smooth function $u$ on $M$ such that 
\[g^{ik} \, g^{jl} \, \big ( g_{ij} - (1+|du|_g^2)^{-1} \, \partial_i u \, \partial_j u \big ) \, \big ( (1+|du|_g^2)^{-\frac{1}{2}} \, D_{k,l}^2 u - q_{kl} \big ) = \tau^2 \, \zeta^2 \, u\] 
at each point in $M$ and $|u| \leq 2 \, r_0^{n-2} \, r^{3-n}$ on the domain $\{r > 2r_0\}$.
\end{proposition}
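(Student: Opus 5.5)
The plan is to take the limit of the solutions $u^{(j)}$ of $(\star_{1,j})$ from Proposition \ref{existence.of.a.solution.for.each.j} as $j \to \infty$, using estimates that are uniform in $j$ on each fixed compact subset of $M$.

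\emph{Uniform bounds.} Fix a compact set $K' \subset M$, say $K' = M \setminus \{r > \rho\}$ for some $\rho$. For all $j$ with $r_j > 2\rho$, $K'$ is contained in $M^{(j)} \setminus \{\frac{1}{2} r_j < r \leq r_j\}$. Lemma \ref{C0.estimate.version.3} then bounds $|u^{(j)}|$ by a constant independent of $j$. Lemma \ref{gradient.estimate.inner.region} gives $|du^{(j)}|_g \leq C$ with $C$ independent of $j$. Once the gradient is bounded, the equation is uniformly elliptic on a neighborhood of $K'$, with ellipticity constants controlled by $C$. It can be written in divergence-like quasilinear form, or we can use that the coefficients $g^{ik}g^{jl}(g_{ij} - (1+|du|^2)^{-1}\partial_i u\,\partial_j u)(1+|du|^2)^{-1/2}$ are bounded in $C^0$. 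Interior estimates for quasilinear equations with bounded gradient then give a uniform $C^{1,\beta}$ bound: for example, De Giorgi--Nash applied to the differentiated equation, or the Ladyzhenskaya--Ural'tseva interior H\"older gradient estimate (Gilbarg--Trudinger, Chapter 13). Schauder theory bootstraps this to uniform $C^{k,\beta}$ bounds on $K'$ for every $k$; the right-hand side $\tau^2\zeta^2 u$ and the term involving $q$ are smooth and under control.

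\emph{Passing to the limit.} Take an exhaustion of $M$ by compact sets and apply Arzel\`a--Ascoli with a diagonal argument. This gives a subsequence $u^{(j)}$ converging in $C^\infty_{loc}(M)$ to a smooth function $u$. The equation passes to the limit pointwise, since all terms depend continuously on the $2$-jet. The decay bound also passes to the limit: Proposition \ref{existence.of.a.solution.for.each.j} gives $|u^{(j)}| \leq 2 r_0^{n-2} r^{3-n}$ on $\{2r_0 < r \leq r_j\}$, and each fixed point with $r > 2r_0$ lies in this region for all large $j$. Hence $|u| \leq 2 r_0^{n-2} r^{3-n}$ on $\{r > 2r_0\}$.

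\emph{Main point to check.} The step needing care is the uniform higher-order interior regularity from a gradient bound alone. I would handle it by the standard Gilbarg--Trudinger theory for quasilinear nondivergence equations, Theorem 13.6 for the interior H\"older gradient estimate, followed by Schauder. This works because the coefficient matrix is smooth in $(x,du)$ and uniformly positive definite when $|du|$ is bounded. Everything else follows directly from the earlier lemmas.
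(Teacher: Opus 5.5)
Your proposal is correct and follows essentially the same route as the paper. Both arguments take the uniform $C^0$ bound from Lemma \ref{C0.estimate.version.3} and the $j$-independent gradient bound on compact sets from Lemma \ref{gradient.estimate.inner.region}. Both then upgrade to uniform higher-derivative bounds via Gilbarg--Trudinger Theorem 13.6 plus Schauder, extract a $C^\infty_{\mathrm{loc}}$-convergent subsequence, and pass the equation and the pointwise decay bound $|u^{(j)}| \leq 2\,r_0^{n-2}\,r^{3-n}$ to the limit.
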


\textbf{Proof.} 
By Lemma \ref{C0.estimate.version.3}, the sequence $u^{(j)}$ is uniformly bounded in $C^0$. Using Lemma \ref{gradient.estimate.inner.region}, we obtain uniform $C^1$-bounds on every compact subset of $M$. Standard elliptic theory (see e.g. Theorem 13.6 in \cite{Gilbarg-Trudinger}) now gives uniform bounds for the higher derivatives of $u^{(j)}$ on every compact subset of $M$. Hence, after passing to a subsequence if necessary, the sequence $u^{(j)}$ converges in $C_{\text{\rm loc}}^\infty$ to a smooth function $u$. It is easy to see that $u$ has all the required properties. \\

\begin{proposition}
\label{higher.derivative.bounds}
For every positive integer $m$, the $m$-th order derivatives of the function $u$ satisfy the bound  
\[|\bar{D}^m u|_{\bar{g}} \leq C(m) \, r^{3-n-m}\] 
on the domain $\{r > 32r_0\}$. 
\end{proposition}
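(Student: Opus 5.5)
Fix a point $p$ with $r(p) = \rho > 32 r_0$ and rescale. On the Euclidean ball $\mathcal{B}_{\bar g}(p, \rho/2)$ (contained in $\{r > 16 r_0\}$, where $\zeta = 0$), define the rescaled function
\[\tilde u(y) = \rho^{-1} \, u(p + \rho y), \qquad |y| < \tfrac{1}{2}.\]
Since $\zeta = 0$ there, $u$ satisfies the Jang equation without the capillary term. The plan has four steps.

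\textbf{Step 1: the rescaled equation.} The Jang operator is invariant under the scaling $u \mapsto \rho^{-1} u(\rho\,\cdot)$ applied together with the rescaled metric $\tilde g(y) = g(p+\rho y)$. Under this scaling, $D^2 u$ is multiplied by $\rho^{-1}$ relative to $\tilde u$'s Hessian, and $du$ is unchanged. Therefore $\tilde u$ solves
\[\tilde g^{ik}\tilde g^{jl}\big(\tilde g_{ij} - (1+|d\tilde u|^2)^{-1}\partial_i\tilde u\,\partial_j\tilde u\big)\big((1+|d\tilde u|^2)^{-1/2}\tilde D^2_{k,l}\tilde u - \tilde q_{kl}\big) = 0,\]
with $\tilde q(y) = \rho\, q(p+\rho y)$. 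The decay hypotheses give $|\partial^m(\tilde g - \bar g)| \leq C(m)\rho^{2-n}$ and $|\partial^m \tilde q| \leq C(m)\rho^{2-n}$, uniformly in $\rho$. By Proposition \ref{existence}, $|\tilde u| \leq C \rho^{2-n} \leq C$.

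\textbf{Step 2: gradient bound.} Apply the interior gradient estimate, Proposition \ref{interior.gradient.estimate}, on the ball of radius $\frac12$, with $\psi$ as in Lemma \ref{gradient.estimate.outer.region}. Because the constant of that estimate depends only on the $C^0$ bound and on bounds for the coefficients, which are uniform here, this gives $|d\tilde u| \leq C$ on $|y| < \frac14$. Unscaled, this already yields $|du| \leq C$, but not yet the decay $r^{2-n}$.

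\textbf{Step 3: higher derivatives.} With the gradient bounded, the equation is uniformly elliptic with smooth, uniformly controlled coefficients. Standard quasilinear theory (De Giorgi–Nash/Ladyzhenskaya–Ural'tseva $C^{1,\beta}$ estimates, e.g. Theorem 13.6 of \cite{Gilbarg-Trudinger}) followed by Schauder bootstrapping gives $\|\tilde u\|_{C^m(|y|<1/8)} \leq C(m)$.

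\textbf{Step 4: improve to $\rho^{2-n}$.} This is the main obstacle: the bound must be proportional to $\rho^{2-n}$, not merely $O(1)$. Since $\tilde u$ is now smooth with bounded derivatives, treat the equation as linear in $\tilde u$:
\[a^{kl}(y)\,\partial_k\partial_l \tilde u + b^k(y)\,\partial_k\tilde u = f(y).\]
Here $a^{kl}$ is uniformly elliptic with $C^m$ bounds, and the source $f$ collects the $\tilde q$ term and the Christoffel terms of $\tilde g$, so $\|f\|_{C^m} \leq C(m)\rho^{2-n}$. Interior Schauder estimates then give
\[\|\tilde u\|_{C^{m+2}} \leq C\big(\|\tilde u\|_{C^0} + \|f\|_{C^m}\big) \leq C(m)\rho^{2-n}.\]
Iterate on shrinking balls, checking at each stage that the coefficient norms stay uniform.

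\textbf{Conclusion.} Scaling back, $\bar D^m u(p) = \rho^{1-m}\,\partial^m\tilde u(0)$, so
\[|\bar D^m u|_{\bar g} \leq C(m)\,\rho^{3-n-m},\]
as claimed.
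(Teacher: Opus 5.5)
Your proposal is correct and follows essentially the same route as the paper. Both arguments first get a uniform gradient bound, then the scale-invariant bounds $|\bar{D}^m u|_{\bar{g}} \leq C(m)\, r^{1-m}$ from Theorem 13.6 of \cite{Gilbarg-Trudinger} (your Steps 1--3, done in rescaled form), and finally linear interior Schauder estimates fed by $|u| \leq 2\, r_0^{n-2}\, r^{3-n}$ and the decay of $q$ and $g-\bar{g}$ to upgrade to $r^{3-n-m}$. The only differences are cosmetic: you re-derive the gradient bound from Proposition \ref{interior.gradient.estimate} on the rescaled ball instead of citing Lemma \ref{gradient.estimate.inner.region}, and you make the scaling explicit where the paper leaves it implicit.
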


\textbf{Proof.}
It follows from Lemma \ref{gradient.estimate.inner.region} that $|du|_g \leq C$ for some uniform constant $C$. Standard elliptic theory (see e.g. Theorem 13.6 in \cite{Gilbarg-Trudinger}) now implies that 
\[|\bar{D}^m u|_{\bar{g}} \leq C(m) \, r^{1-m}\] 
on the domain $\{r>8r_0\}$. On the domain $\{r > 16r_0\}$, we have $|u| \leq 2 \, r_0^{n-2} \, r^{3-n}$ and $|\bar{D}^m q|_{\bar{g}} \leq C(m) \, r^{1-n-m}$ for every nonnegative integer $m$. Hence, standard interior estimates for linear PDE imply that 
\[|\bar{D}^m u|_{\bar{g}} \leq C(m) \, r^{3-n-m}\] 
on the domain $\{r > 32r_0\}$. This completes the proof of Proposition \ref{higher.derivative.bounds}. \\

\subsection{The metric $\check{g}$} Let $u$ denote the function constructed in Proposition \ref{existence}. We define a smooth Riemannian metric $\check{g}$ and a $1$-form $\Xi$ on $M$ by 
\[\check{g}_{ij} = g_{ij} + \partial_i u \, \partial_j u\] 
and 
\[\Xi_k = \frac{1}{2} \, \partial_k \log(1+|du|_g^2) - (1+|du|_g^2)^{-\frac{1}{2}} \, g^{ij} \, q_{ik} \, \partial_j u.\] 

\begin{lemma}
\label{decay}
We have 
\[|\bar{D}^m(\check{g} - (1+\alpha \, r^{2-n}) \, \bar{g})|_{\bar{g}} \leq O(r^{2-n-m-2\delta})\] 
for every nonnegative integer $m$. Moreover, $|R_{\check{g}}| \leq O(r^{-n-2\delta})$, $|\Xi|_{\check{g}} \leq O(r^{3-2n})$, and $|\text{\rm div}_{\check{g}} \Xi| \leq O(r^{2-2n})$. In particular, the functions $|R_{\check{g}}|$, $|\Xi|_{\check{g}}^2$, and $|\text{\rm div}_{\check{g}} \Xi|$ are integrable. 
\end{lemma}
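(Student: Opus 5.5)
The proof will read everything off Proposition \ref{higher.derivative.bounds}, which gives $|\bar{D}^m u|_{\bar{g}} \leq C(m) \, r^{3-n-m}$ for $m \geq 1$ on $\{r > 32r_0\}$, combined with the assumed decay of $g$ and $q$.

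\textbf{The metric.} Write
\[\check{g} - (1+\alpha \, r^{2-n}) \, \bar{g} = \big(g - (1+\alpha \, r^{2-n}) \, \bar{g}\big) + du \otimes du.\]
The first term is controlled by hypothesis. The Leibniz rule gives $|\bar{D}^m(du \otimes du)|_{\bar{g}} \leq C \sum_{a+b=m} r^{2-n-a} \, r^{2-n-b} = O(r^{4-2n-m})$. Since $n \geq 4$, we have $4-2n \leq 2-n-2\delta$ as long as $2\delta \leq n-2$. We may shrink $\delta$ to guarantee this, because the hypothesis remains true for smaller $\delta$.

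\textbf{The scalar curvature.} The first claim with $m \leq 2$ shows that $\check{g}$ is asymptotically conformal to $\bar{g}$ with factor $1+\alpha \, r^{2-n}$, up to an error $h$ with $|\bar{D}^m h| = O(r^{2-n-m-2\delta})$. The conformal factor $(1+\alpha \, r^{2-n})^{\frac{4}{n-2}}$ differs from $1+\alpha \, r^{2-n}$ only by a term of size $O(r^{4-2n})$, which we absorb into $h$. The metric $\phi^{\frac{4}{n-2}} \, \bar{g}$ with $\phi = 1 + \frac{n-2}{4} \, \alpha \, r^{2-n}$ has scalar curvature $-\frac{4(n-1)}{n-2} \, \phi^{-\frac{n+2}{n-2}} \, \Delta_{\bar{g}} \phi = 0$, because $r^{2-n}$ is harmonic. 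Expanding $R$ linearly in $h$ around this conformally flat metric, the error is bounded by $|\bar{D}^2 h| + |\bar{D} h|^2 + \ldots = O(r^{-n-2\delta})$. The quadratic cross terms in $\alpha \, r^{2-n}$ and $h$ are also small enough.

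\textbf{The one-form $\Xi$.} First, $|du|_g = O(r^{2-n})$, so $\partial_k \log(1+|du|^2) = O(|du| \, |D^2 u| + |du|^2 \, |\partial g|) = O(r^{2-n} \, r^{1-n}) = O(r^{3-2n})$. Second, the $q$-term is $O(r^{1-n} \, r^{2-n}) = O(r^{3-2n})$. One more derivative costs one power of $r$, using the Christoffel symbols of $\check{g}$, which are $O(r^{1-n})$, and $|\bar{D}q| = O(r^{-n})$. This gives $\text{div}_{\check{g}} \Xi = O(r^{2-2n})$.

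\textbf{Integrability.} The exponents $-n-2\delta$, $6-4n$ and $2-2n$ are all less than $-n$ when $n \geq 4$, so each function is integrable near infinity. On the compact remainder all quantities are smooth, so they are integrable there as well.

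The only step needing care is the curvature estimate, specifically making sure that no term linear in $\alpha \, r^{2-n}$ survives. This is handled by harmonicity of $r^{2-n}$. The rest is bookkeeping of exponents.
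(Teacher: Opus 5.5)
Your proof is correct and takes the same route as the paper. Like the paper, you write $\check g - g = du\otimes du$, feed in Proposition \ref{higher.derivative.bounds} together with the decay of $g$ and $q$, and read off each estimate; you also spell out two points the paper leaves implicit, namely that one may assume $2\delta \leq n-2$, and that the term linear in $\alpha\, r^{2-n}$ drops out of $R_{\check g}$ because $r^{2-n}$ is harmonic. One small slip: the factor you compare with $1+\alpha\, r^{2-n}$ should be $\phi^{\frac{4}{n-2}}$ with $\phi = 1+\frac{n-2}{4}\,\alpha\, r^{2-n}$ (as your next sentence indicates), not $(1+\alpha\, r^{2-n})^{\frac{4}{n-2}}$.
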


\textbf{Proof.}
Using Proposition \ref{higher.derivative.bounds} together with the identity $\check{g} - g = du \otimes du$, we obtain 
\[|\bar{D}^m(\check{g} - g)|_{\bar{g}} \leq O(r^{4-2n-m})\] 
for every nonnegative integer $m$. Moreover, 
\[|\bar{D}^m(g - (1+\alpha \, r^{2-n}) \, \bar{g})|_{\bar{g}} \leq O(r^{2-n-m-2\delta})\] 
for every nonnegative integer $m$. Putting these facts together, we conclude that 
\[|\bar{D}^m(\check{g} - (1+\alpha \, r^{2-n}) \, \bar{g})|_{\bar{g}} \leq O(r^{2-n-m-2\delta})\] 
for every nonnegative integer $m$. This implies $|R_{\check{g}}| \leq O(r^{-n-2\delta})$. Moreover, using Proposition \ref{higher.derivative.bounds} and the estimate $|q|_{\bar{g}} \leq O(r^{1-n})$, we obtain $|\Xi|_{\bar{g}} \leq O(r^{3-2n})$ near infinity. The estimate $|\text{\rm div}_{\check{g}} \Xi| \leq O(r^{2-2n})$ follows similarly. This completes the proof of Lemma \ref{decay}. \\

\begin{proposition}
\label{consequence.of.Schoen.Yau.identity}
We have 
\[\frac{1}{2} \, R_{\check{g}} - |\Xi|_{\check{g}}^2 + \text{\rm div}_{\check{g}} \Xi \geq Q + (\kappa_0^2 - \tau^2 \, u^2) \, |d\zeta|_g^2 + (\kappa_1 - \tau^2 \, |u|) \, \zeta^2 \, n \, |q|_g\] 
at each point in $M$.
\end{proposition}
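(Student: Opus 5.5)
We would derive this from the Schoen--Yau identity for the Jang graph, in the form used by Eichmair. Write $\Sigma$ for the graph of $u$ in $M \times \mathbb{R}$, with induced metric $\check{g}$. Let $h$ be its second fundamental form and $H = \text{\rm tr}_{\check{g}} h$. The tensor $q$ is extended trivially in the vertical direction. The defining equation of $u$ reads $H - \text{\rm tr}_{\check{g}}(q) = \tau^2 \zeta^2 u$, since the left-hand side of the PDE in Proposition \ref{existence} is exactly $H - \text{\rm tr}_{\check{g}} q$ computed on the graph.

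\textbf{Step 1: the identity.} The Schoen--Yau computation (Gauss equation for $\Sigma$ in the product $M \times \mathbb{R}$, combined with the Codazzi-type identity) gives
\[\frac{1}{2} R_{\check{g}} - |\Xi|_{\check{g}}^2 + \text{\rm div}_{\check{g}} \Xi = (\mu - J(\nu)) + \frac{1}{2} |h - q|_{\check{g}}^2 + |\Xi - X|^2_{\check{g}} + \frac{1}{2} (H - \text{\rm tr}_{\check{g}} q)^2 - \Xi(\ldots) + \ldots,\]
plus a term $\langle X, d(H-\text{\rm tr}_{\check{g}} q)\rangle$. Here $X$ is the one-form $(h-q)(\nu^\top,\cdot)$ appearing in Eichmair's formulation. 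I would rederive it carefully in the form
\[\frac{1}{2} R_{\check{g}} - |\Xi|^2 + \text{\rm div}\,\Xi = \mu - J(w) + \frac{1}{2}|h-q|^2 + |\Xi - \omega|^2 + \frac{1}{2}\phi^2 - \langle \omega, d\phi\rangle + \text{(exact divergence terms absorbed into } \Xi),\]
where $\phi = H - \text{\rm tr}_{\check{g}} q$, $|w|_g \leq 1$, and $\omega$ is a one-form with $|\omega|_{\check{g}} \leq 1$. Since $|J(w)| \leq |J|_g$, this yields
\[\frac{1}{2} R_{\check{g}} - |\Xi|^2 + \text{\rm div}\,\Xi \geq \mu - |J|_g + \frac{1}{2}\phi^2 - |d\phi|_{\check{g}}.\]
If the precise form only gives $\frac{1}{2}\phi^2 + \langle \ldots, d\phi\rangle$ with a vector of norm $\leq 1$, that is enough for the remaining steps.

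\textbf{Step 2: insert $\phi = \tau^2 \zeta^2 u$.} Compute $d\phi = 2\tau^2 \zeta u\, d\zeta + \tau^2 \zeta^2 du$. The $\zeta^2 du$ piece is the delicate one. The extra terms involving $du$ combine with $\omega$, since $\omega$ is essentially proportional to $(1+|du|^2)^{-1/2} du$, i.e. $\check{g}$-normalized. My plan is to bound it using $|du|_{\check{g}} \leq 1$, together with $\zeta^2 \tau^2 |du| \cdot (\ldots)$ controlled through $\text{\rm tr}\, q$. More precisely, the coupling of $\zeta^2$ with $n|q|_g$ in the statement suggests rewriting $\phi$ via the equation as $H - \text{\rm tr}_{\check{g}} q$ and using $|\text{\rm tr}_{\check{g}} q| \leq n|q|_g$ with $|u|$. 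The cross term $2\tau^2 \zeta |u| \, |d\zeta|$ is bounded by Cauchy--Schwarz against $\frac{1}{2}\phi^2$ and $\tau^2 u^2 |d\zeta|^2$, producing the term $-\tau^2 u^2 |d\zeta|_g^2$. The term $\tau^2 \zeta^2 \langle \omega, du\rangle$ should produce $-\tau^2 |u| \zeta^2 n |q|_g$ after using the equation again to replace a Hessian-type quantity by $q$.

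\textbf{Step 3: conclude.} Finally, use (\ref{choice.of.Q}) to write
\[\mu - |J|_g \geq Q + \kappa_0^2 |d\zeta|_g^2 + \kappa_1 \zeta^2 n |q|_g,\]
and combine with Step 2 to get the stated inequality.

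The main obstacle is Step 2: obtaining exactly the error terms $\tau^2 u^2 |d\zeta|^2$ and $\tau^2 |u| \zeta^2 n |q|$ rather than a $|du|$-dependent term. I would first try a cleaner route that replaces $\Xi$ by a modification absorbing $\tau^2 \zeta^2 u$ times the normalized gradient. The term $\text{\rm div}$ of that modification yields $\tau^2 \zeta^2 u \, \phi$-type contributions, which are handled by the equation and $|\text{\rm tr}_{\check{g}} q| \leq n|q|_g$.
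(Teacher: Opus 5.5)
Your outline (apply the Schoen--Yau identity with $\Theta=\tau^2\zeta^2u$, then use (\ref{choice.of.Q})) is right. However, Step 2 is the entire content of the proposition, and it has a genuine gap: the estimates you sketch there would not give the stated inequality.

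First, your identity omits the term $\Theta\,\text{\rm tr}_{\check g}(q)$. Proposition~\ref{Schoen.Yau.identity} produces $\frac12\Theta^2+\Theta\,\text{\rm tr}_{\check g}(q)$, not just $\frac12\Theta^2$. That term is exactly where the error $-\tau^2\zeta^2|u|\,n\,|q|_g$ comes from, via $|\text{\rm tr}_{\check g}(q)|\le n|q|_g$. It does not come from $\tau^2\zeta^2\langle\omega,du\rangle$ by ``using the equation again.''

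Second, it is not enough to know that the $d\phi$-term has the form $\langle\omega,d\phi\rangle$ with $|\omega|\le 1$. If you bound it below by $-|\omega|\,|d\phi|$, the piece coming from $\tau^2\zeta^2\,du$ becomes an error of size $\tau^2\zeta^2(1+|du|_g^2)^{-1/2}|du|_g^2$. Nothing on the right-hand side of the statement controls this. Your Cauchy--Schwarz absorption of the cross term against $\frac12\phi^2=\frac12\tau^4\zeta^4u^2$ and $\tau^2u^2|d\zeta|_g^2$ also fails. The cross term $2\tau^2\zeta u\,(1+|du|_g^2)^{-1/2}\langle du,d\zeta\rangle_g$ is linear in $u$, while both absorbing terms are quadratic in $u$. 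So the pointwise inequality breaks down where $|u|$ is small and $\langle du,d\zeta\rangle_g\neq 0$.

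The missing idea is that the exact form and sign of the $d\Theta$-term matter. The identity contains $+(1+|du|_g^2)^{-1/2}\langle du,d\Theta\rangle_g$, and
\[\langle du,d(\zeta^2u)\rangle_g=|\zeta\,du+u\,d\zeta|_g^2-u^2\,|d\zeta|_g^2.\]
So the $\zeta^2|du|_g^2$ contribution is a favorable term, and it completes the square with the cross term. Since $(1+|du|_g^2)^{-1/2}\le 1$, the whole $d\Theta$-term is bounded below by $-\tau^2u^2|d\zeta|_g^2$.

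The rest is then immediate:
\begin{itemize}
\item drop the square term and $\frac12\Theta^2\ge 0$;
\item use $(1+|du|_g^2)^{-1/2}|\langle du,J\rangle_g|\le|J|_g$;
\item use $\Theta\,\text{\rm tr}_{\check g}(q)\ge-\tau^2\zeta^2|u|\,n\,|q|_g$;
\item insert (\ref{choice.of.Q}).
\end{itemize}
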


\textbf{Proof.}
Applying Proposition \ref{Schoen.Yau.identity} with $w = u$ and $\Theta = \tau^2 \, \zeta^2 \, u$ gives 
\begin{align*} 
&\frac{1}{2} \, R_{\check{g}} - |\Xi|_{\check{g}}^2 + \text{\rm div}_{\check{g}} \Xi \\ 
&= \frac{1}{2} \, \check{g}^{ik} \, \check{g}^{jl} \, \big ( (1+|du|_g^2)^{-\frac{1}{2}} \, D_{i,j}^2 u - q_{ij} \big ) \, \big ( (1+|du|_g^2)^{-\frac{1}{2}} \, D_{k,l}^2 u - q_{kl} \big ) \\ 
&+ \mu - (1+|du|_g^2)^{-\frac{1}{2}} \, \langle du,J \rangle_g \\ 
&+ \tau^2 \, (1+|du|_g^2)^{-\frac{1}{2}} \, (|\zeta \, du + u \, d\zeta|_g^2 - u^2 \, |d\zeta|_g^2) + \frac{1}{2} \, \tau^4 \, \zeta^4 \, u^2 + \tau^2 \, \zeta^2 \, u \, \text{\rm tr}_{\check{g}}(q) 
\end{align*} 
at each point in $M$. Using the inequality $|\text{\rm tr}_{\check{g}}(q)| \leq n \, |q|_g$, we obtain
\begin{equation} 
\label{inequality.a}
\frac{1}{2} \, R_{\check{g}} - |\Xi|_{\check{g}}^2 + \text{\rm div}_{\check{g}} \Xi \geq \mu - |J|_g - \tau^2 \, u^2 \, |d\zeta|_g^2 - \tau^2 \, \zeta^2 \, |u| \, n \, |q|_g 
\end{equation} 
at each point in $M$. On the other hand, the inequality (\ref{choice.of.Q}) gives
\begin{align} 
\label{inequality.b}
&\mu - |J|_g - \tau^2 \, u^2 \, |d\zeta|_g^2 - \tau^2 \, \zeta^2 \, |u| \, n \, |q|_g \notag \\ 
&\geq Q + (\kappa_0^2 - \tau^2 \, u^2) \, |d\zeta|_g^2 + (\kappa_1 - \tau^2 \, |u|) \, \zeta^2 \, n \, |q|_g 
\end{align}
at each point in $M$. If we combine (\ref{inequality.a}) and (\ref{inequality.b}), the assertion follows. This completes the proof of Proposition \ref{consequence.of.Schoen.Yau.identity}. \\

\begin{corollary}
\label{quadratic.form}
Suppose that $f$ is a smooth test function with the property that $f$ is supported in the domain $\big \{ |u| < \min \{\kappa_0 \, \tau^{-1},\kappa_1 \, \tau^{-2}\} \big \}$ and $f$ is constant near infinity. Then 
\[\int_M |df|_{\check{g}}^2 \, d\text{\rm vol}_{\check{g}} + \frac{1}{2} \int_M R_{\check{g}} \, f^2 \, d\text{\rm vol}_{\check{g}} \geq \int_M Q \, f^2 \, d\text{\rm vol}_{\check{g}}.\] 
Note that the integral $\int_M R_{\check{g}} \, f^2 \, d\text{\rm vol}_{\check{g}}$ is well-defined in view of Lemma \ref{decay}, and the integral $\int_M Q \, f^2 \, d\text{\rm vol}_{\check{g}}$ is well-defined in view of (\ref{decay.of.Q}).
\end{corollary}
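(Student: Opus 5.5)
Multiply the inequality of Proposition \ref{consequence.of.Schoen.Yau.identity} by $f^2$ and integrate with respect to $d\text{\rm vol}_{\check{g}}$. On the support of $f$ we have $|u| < \kappa_0 \, \tau^{-1}$ and $|u| < \kappa_1 \, \tau^{-2}$. Hence $\kappa_0^2 - \tau^2 \, u^2 > 0$ and $\kappa_1 - \tau^2 \, |u| > 0$ wherever $f \neq 0$, so the last two terms on the right hand side are nonnegative after multiplication by $f^2$. This gives
\[\int_M \Big( \frac{1}{2} \, R_{\check{g}} - |\Xi|_{\check{g}}^2 + \text{\rm div}_{\check{g}} \Xi \Big) \, f^2 \, d\text{\rm vol}_{\check{g}} \geq \int_M Q \, f^2 \, d\text{\rm vol}_{\check{g}}.\]
Every integral here converges absolutely. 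Indeed, $f$ is bounded, and by Lemma \ref{decay} the functions $|R_{\check{g}}|$, $|\Xi|_{\check{g}}^2$ and $|\text{\rm div}_{\check{g}} \Xi|$ are integrable. The function $Q$ is integrable by (\ref{decay.of.Q}).

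Next, integrate the divergence term by parts on the domain $\{r \leq \rho\}$ and let $\rho \to \infty$:
\[\int_{\{r \leq \rho\}} f^2 \, \text{\rm div}_{\check{g}} \Xi \, d\text{\rm vol}_{\check{g}} = -\int_{\{r \leq \rho\}} 2f \, \langle df, \Xi \rangle_{\check{g}} \, d\text{\rm vol}_{\check{g}} + \int_{\{r = \rho\}} f^2 \, \langle \Xi, \nu \rangle_{\check{g}}.\]
Since $f$ is bounded and $|\Xi|_{\check{g}} \leq O(r^{3-2n})$ while the area of $\{r = \rho\}$ is $O(\rho^{n-1})$, the boundary term is $O(\rho^{2-n}) \to 0$. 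The term $f \, \langle df, \Xi \rangle_{\check{g}}$ is compactly supported, because $f$ is constant near infinity. We therefore obtain
\[\int_M f^2 \, \text{\rm div}_{\check{g}} \Xi \, d\text{\rm vol}_{\check{g}} = -\int_M 2f \, \langle df, \Xi \rangle_{\check{g}} \, d\text{\rm vol}_{\check{g}}.\]

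Finally, apply the pointwise inequality
\[-2f \, \langle df, \Xi \rangle_{\check{g}} \leq |df|_{\check{g}}^2 + f^2 \, |\Xi|_{\check{g}}^2.\]
The $f^2 \, |\Xi|_{\check{g}}^2$ term cancels the $-|\Xi|_{\check{g}}^2 \, f^2$ term, which leaves
\[\int_M |df|_{\check{g}}^2 \, d\text{\rm vol}_{\check{g}} + \frac{1}{2} \int_M R_{\check{g}} \, f^2 \, d\text{\rm vol}_{\check{g}} \geq \int_M Q \, f^2 \, d\text{\rm vol}_{\check{g}}.\]

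The only step requiring care is the justification at infinity: the vanishing of the boundary flux and the absolute convergence of each integral. Both follow directly from the decay rates in Lemma \ref{decay}, using $n \geq 4$. Everything else is algebraic.
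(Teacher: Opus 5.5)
Your proposal is correct and follows the paper's proof. Both integrate the pointwise bound $\frac{1}{2} R_{\check{g}} - |\Xi|_{\check{g}}^2 + \text{\rm div}_{\check{g}} \Xi \geq Q$ (valid on the support of $f$) against $f^2$, use $|\Xi|_{\check{g}} \leq O(r^{3-2n})$ and the constancy of $f$ near infinity to get $\int_M \text{\rm div}_{\check{g}}(f^2 \, \Xi) \, d\text{\rm vol}_{\check{g}} = 0$ (your truncation to $\{r \leq \rho\}$ just spells this out), and discard $\int_M |df + f \, \Xi|_{\check{g}}^2 \, d\text{\rm vol}_{\check{g}} \geq 0$, which is exactly your Young's inequality step.
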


\textbf{Proof.} 
Proposition \ref{consequence.of.Schoen.Yau.identity} implies that 
\[\frac{1}{2} \, R_{\check{g}} - |\Xi|_{\check{g}}^2 + \text{\rm div}_{\check{g}} \Xi \geq Q\] 
on the domain $\big \{ |u| < \min \{\kappa_0 \, \tau^{-1},\kappa_1 \, \tau^{-2}\} \big \}$. Since $f$ is supported in the domain $\big \{ |u| < \min \{\kappa_0 \, \tau^{-1},\kappa_1 \, \tau^{-2}\} \big \}$, we obtain 
\begin{equation} 
\label{integrated.version.of.Schoen.Yau.identity}
\int_M \Big ( \frac{1}{2} \, R_{\check{g}} - |\Xi|_{\check{g}}^2 + \text{\rm div}_{\check{g}} \Xi \Big ) \, f^2 \, d\text{\rm vol}_{\check{g}} \geq \int_M Q \, f^2 \, d\text{\rm vol}_{\check{g}}. 
\end{equation} 
Note that the integral on the left hand side is well-defined in view of Lemma \ref{decay}, and the integral on the right hand side is well-defined in view of (\ref{decay.of.Q}). Moreover, Lemma \ref{decay} implies that $|\Xi|_{\check{g}} \leq O(r^{3-2n})$ near infinity. Since $f$ is constant near infinity and $\check{g}$ is uniformly equivalent to $\bar{g}$ near infinity, we conclude that 
\begin{equation} 
\label{divergence.theorem}
\int_M \text{\rm div}_{\check{g}}(f^2 \, \Xi) \, d\text{\rm vol}_{\check{g}} = 0 
\end{equation} 
by the divergence theorem. Subtracting (\ref{divergence.theorem}) from (\ref{integrated.version.of.Schoen.Yau.identity}) gives 
\begin{align*}  
&\int_M |df|_{\check{g}}^2 \, d\text{\rm vol}_{\check{g}} + \frac{1}{2} \int_M R_{\check{g}} \, f^2 \, d\text{\rm vol}_{\check{g}} - \int_M |df + f \, \Xi|_{\check{g}}^2 \, d\text{\rm vol}_{\check{g}} \\ 
&\geq \int_M Q \, f^2 \, d\text{\rm vol}_{\check{g}}. 
\end{align*} 
This completes the proof of Corollary \ref{quadratic.form}. \\

\begin{lemma}
\label{tubular.neighborhood.1}
We have 
\[\mathcal{N}_{(M,\check{g})}(E_0,s_1+2s_0) \subset \Big \{ |u| \leq \frac{1}{2} \, \min \{\kappa_0 \, \tau^{-1},\kappa_1 \, \tau^{-2}\} \big \}.\] 
\end{lemma}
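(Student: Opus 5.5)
The plan is to use that $u$ is a graph with a controlled gradient, so $\check{g}$-distance dominates $g$-distance, and to combine this with the pointwise bound $|u| \leq 2 \, r_0^{n-2} \, r^{3-n}$ from Proposition \ref{existence} together with a Lipschitz bound. Since $\check{g} = g + du \otimes du \geq g$, every curve has $\check{g}$-length at least its $g$-length. Hence $\mathcal{N}_{(M,\check{g})}(E_0,\rho) \subset \mathcal{N}_{(M,g)}(E_0,\rho)$ for every $\rho$, but this inclusion alone does not bound $|u|$ in terms of $\rho$. The better tool is the graph structure: along any curve $\gamma$, $|\frac{d}{dt} u(\gamma(t))| = |du(\gamma')| \leq |\gamma'|_{\check{g}}$, because $|\gamma'|_{\check{g}}^2 = |\gamma'|_g^2 + (du(\gamma'))^2$. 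Thus $u$ is $1$-Lipschitz with respect to the distance of $\check{g}$.

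Next, I would bound $|u|$ on the closure of $E_0 = \{r > 8r_0\}$. Proposition \ref{existence} gives $|u| \leq 2 \, r_0^{n-2} \, (8r_0)^{3-n} = 2 \cdot 8^{3-n} \, r_0 = 2^{10-3n} \, r_0$ there. For any point $x \in \mathcal{N}_{(M,\check{g})}(E_0,s_1+2s_0)$, there is a point $y \in \bar{E}_0$ with $d_{\check{g}}(x,y) \leq s_1+2s_0$; if the neighborhood is open, the inequality is strict up to an arbitrarily small error. By the Lipschitz property,
\[|u(x)| \leq |u(y)| + d_{\check{g}}(x,y) \leq 2^{10-3n} \, r_0 + s_1 + 2s_0.\]
The choice of $\tau$ in (\ref{tau}) then gives $|u(x)| \leq \frac{1}{2} \min\{\kappa_0 \, \tau^{-1}, \kappa_1 \, \tau^{-2}\}$, which is the claim.

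The argument is short, and the only points needing care are bookkeeping. First, the bound of Proposition \ref{existence} holds on $\{r > 2r_0\}$, so it covers $\bar{E}_0$ by continuity. Second, the Lipschitz estimate must be applied along nearly minimizing curves, and one passes to the infimum over them. Third, closedness versus openness of the tubular neighborhood only affects the argument through limits. I expect the main conceptual step to be recognizing the $1$-Lipschitz property of $u$ with respect to $\check{g}$; this is what makes the neighborhood taken in $\check{g}$, not in $g$, the right object.
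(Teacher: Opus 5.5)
Your proposal is correct and follows the paper's own proof. The paper likewise bounds $|u| \leq 2^{10-3n} \, r_0$ on $E_0$ and uses $|du|_{\check{g}} \leq 1$, which is your $1$-Lipschitz property of $u$ with respect to $d_{\check{g}}$, together with the choice of $\tau$ in (\ref{tau}).
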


\textbf{Proof.}
Recall that 
\[|u| \leq 2 \, r_0^{n-2} \, r^{3-n} \leq 2^{10-3n} \, r_0\] 
on the domain $E_0 = \{r > 8r_0\}$. Moreover, $|du|_{\check{g}} \leq 1$ by definition of the metric $\check{g}$. Using (\ref{tau}), we conclude that 
\[|u| \leq 2^{10-3n} \, r_0 + s_1+2s_0 \leq \frac{1}{2} \, \min \{\kappa_0 \, \tau^{-1},\kappa_1 \, \tau^{-2}\}\] 
on the set $\mathcal{N}_{(M,\check{g})}(E_0,s_1+2s_0)$. This completes the proof of Lemma \ref{tubular.neighborhood.1}. \\

\begin{lemma}
\label{tubular.neighborhood.2}
We have 
\[\mathcal{N}_{(M,\check{g})}(E_0,2s_0) \subset \mathcal{N}_g(E_0,2s_0).\] 
In particular, 
\[Q(x) > \frac{128}{s_1s_0}\] 
for each point $x \in \mathcal{N}_{(M,\check{g})}(E_0,2s_0) \setminus E_0$.
\end{lemma}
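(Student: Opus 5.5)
The inclusion will follow from the pointwise comparison $\check{g} = g + du \otimes du \geq g$ as quadratic forms. From this comparison, the $\check{g}$-length of any piecewise smooth curve is at least its $g$-length, and so $d_{\check{g}}(x,y) \geq d_g(x,y)$ for all $x,y \in M$. In particular, $d_{\check{g}}(x,E_0) \geq d_g(x,E_0)$ for every $x \in M$.

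The key steps, in order, are as follows. First, I record that $\check{g}(v,v) = g(v,v) + (du(v))^2 \geq g(v,v)$ for every tangent vector $v$. Second, I integrate this along curves: for a curve $\gamma$ from $x$ to a point of $E_0$,
\[L_{\check{g}}(\gamma) \geq L_g(\gamma) \geq d_g(x,E_0).\]
Taking the infimum over all such curves gives $d_{\check{g}}(x,E_0) \geq d_g(x,E_0)$. Third, I interpret $\mathcal{N}_{(M,\check{g})}(E_0,2s_0)$ as the set of points with $d_{\check{g}}(x,E_0) < 2s_0$ (or $\leq 2s_0$; the argument is identical with the same convention on both sides). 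Then every such $x$ satisfies $d_g(x,E_0) < 2s_0$, and hence lies in $\mathcal{N}_g(E_0,2s_0)$. This proves the first assertion.

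For the second assertion, I take $x \in \mathcal{N}_{(M,\check{g})}(E_0,2s_0) \setminus E_0$. By the inclusion just proved, $x \in \mathcal{N}_g(E_0,2s_0) \setminus E_0$. The defining property (\ref{choice.of.s_0.and.s_1}) of $s_0$ and $s_1$ then gives $Q(x) > \frac{128}{s_1 s_0}$ directly.

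There is no real obstacle here. The only point needing care is that the tubular neighborhoods are defined via the respective distance functions with a consistent convention (open or closed), so that the monotonicity of distance transfers to the neighborhoods. I would state this convention explicitly when writing the proof.
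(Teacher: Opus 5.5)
Your proof is correct and follows the paper's argument: the pointwise inequality $\check{g} = g + du \otimes du \geq g$ gives $d_{\check{g}} \geq d_g$, which yields the inclusion of tubular neighborhoods. The bound on $Q$ then follows directly from (\ref{choice.of.s_0.and.s_1}).
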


\textbf{Proof.} 
The first statement follows from the pointwise inequality $\check{g} \geq g$. The second statement follows from the first statement together with (\ref{choice.of.s_0.and.s_1}). This completes the proof of Lemma \ref{tubular.neighborhood.2}. \\

In the next step, we recall an important lemma from the work of Lesourd-Unger-Yau.

\begin{lemma}[cf. Lesourd-Unger-Yau \cite{Lesourd-Unger-Yau}, Proposition 3.1]
\label{Phi}
We can find an open, connected domain $E$ with smooth boundary, a smooth function $\Phi$ defined on $E$, and a smooth function $\hat{Q}$ defined on $E$ with the following properties: 
\begin{itemize}
\item The closure of $E_0$ is contained in $E$.
\item The domain $E$ is contained in $\mathcal{N}_{(M,\check{g})}(E_0,s_1+2s_0)$. In particular, the complement $E \setminus E_0$ is a bounded subset of $(M,g)$. 
\item $\Phi=0$ and $\hat{Q} = \frac{1}{2} \, Q$ at each point in $E_0$.
\item $\Phi \leq 0$ and $\hat{Q} > 0$ at each point in $E$.
\item $\Phi \to -\infty$ on the boundary $\partial E$. 
\item $Q + \frac{1}{2} \, \Phi^2 - 2 \, |d\Phi|_{\check{g}} \geq 2\hat{Q}$ at each point in $E$. 
\end{itemize}
\end{lemma}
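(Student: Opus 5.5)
The plan is the shielding construction of Lesourd--Unger--Yau: build $\Phi$ as a function of the $\check{g}$-distance to $E_0$, which vanishes on $E_0$ and blows down to $-\infty$ at a finite distance. Write $\rho$ for a smoothing of $d_{\check{g}}(E_0,\cdot)$, arranged so that $\rho=0$ on a neighborhood of $\overline{E_0}$ and $|d\rho|_{\check{g}}\leq 2$. The smoothing can be done in the usual way by convolving a Lipschitz function with Lipschitz constant $1$, keeping the Lipschitz constant below $2$ and staying $C^0$-close to the distance function. The outer region $E\setminus E_0$ lies at bounded $g$-distance from $E_0$, and $\partial E_0=\{r=8r_0\}$ is compact, so all the relevant sets are precompact.

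We set $\Phi=\varphi\circ\rho$ with $\varphi\leq 0$, $\varphi=0$ on $[0,s_0]$ (more precisely near $[0,s_0]$, after shrinking slightly), $\varphi$ nonincreasing, and $\varphi\to-\infty$ as $t\nearrow s_1+2s_0$, or at some earlier point $t^*\in(2s_0,s_1+2s_0)$. We then take $E=\{\rho<t^*\}$, choosing $t^*$ to be a regular value of $\rho$ by Sard's theorem so that $\partial E$ is smooth; we pass to the connected component containing $E_0$. This gives the first five properties. Note that $E\subset\mathcal{N}_{(M,\check{g})}(E_0,s_1+2s_0)$ provided the smoothing error is small.

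The real content is the inequality $Q+\frac{1}{2}\Phi^2-2|d\Phi|_{\check{g}}\geq 2\hat{Q}$. Since $|d\Phi|\leq 2|\varphi'|$, it suffices to arrange $Q+\frac12\varphi^2-4|\varphi'|\geq 2\hat{Q}>0$. We split into two regions.

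\emph{Region $\rho\leq 2s_0$.} Here the points lie in $\mathcal{N}_{(M,\check{g})}(E_0,2s_0)$, so Lemma \ref{tubular.neighborhood.2} gives $Q>128/(s_1s_0)$ off $E_0$. We let $\varphi$ decrease very slowly there, say $|\varphi'|\leq 16/(s_1s_0)$, so that $4|\varphi'|\leq \frac12 Q$. On $E_0$ itself, $\varphi'=0$ and we define $\hat{Q}=\frac12 Q$.

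\emph{Region $\rho\geq 2s_0$.} Here $Q$ may be tiny, so we use the Riccati-type term instead. By then $\varphi\leq -c$ with $c\approx 16/s_1$, coming from the ramp of length about $s_0$. We continue with $\varphi$ solving $\varphi'=-\frac1{16}(\varphi^2+\epsilon)$, so that $4|\varphi'|=\frac14(\varphi^2+\epsilon)$ and $\frac12\varphi^2-4|\varphi'|\geq\frac14\varphi^2-\frac{\epsilon}{4}>0$. This equation blows up in time about $16\pi/(2\sqrt{\ldots})$, roughly $16/c\approx s_1$. This matches the choice of constants $128/(s_1s_0)$, which is why the allowed total width is $s_1+2s_0$.

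We then define $\hat{Q}$ as a smooth positive function dominated by half the left-hand side and equal to $\frac12Q$ on $E_0$, for instance $\hat{Q}=\frac12\min$ regularized. The main obstacle is calibrating the constants so that the blow-up happens before distance $s_1+2s_0$ while the first ramp stays compatible with $Q>128/(s_1s_0)$. Concretely, with a ramp of slope $16/(s_1s_0)$ over an interval of length $s_0$ we get $|\varphi|=16/s_1$, and the Riccati solution $\varphi'=-\varphi^2/16$ starting at $-16/s_1$ blows up after time exactly $s_1$. The small $\epsilon$, the smoothing errors, and the factor $2$ in $|d\rho|$ are absorbed in the margins; if they are not, we shrink the constants slightly.
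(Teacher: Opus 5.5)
Your proposal is correct and is the construction the paper intends; the paper only remarks that the lemma follows from Lemma \ref{tubular.neighborhood.2} and defers the details to Brendle--Wang. You take $\Phi=\varphi\circ\rho$ with $\rho$ a smoothed $\check{g}$-distance to $E_0$, use a slow ramp in the shield region where $Q>128/(s_1s_0)$, follow it with a Riccati blow-up within width $s_1$, and obtain $\hat{Q}$ by cutoff and regularization. That is what the constants are calibrated for. The one point needing care is that the smoothing error forces the ramp to end strictly inside $\mathcal{N}_{(M,\check{g})}(E_0,2s_0)$; this is harmless, since your Riccati coefficient $1/16$ can be raised toward $1/8$, leaving ample room before width $s_1+2s_0$.
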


Lemma \ref{Phi} follows from Lemma \ref{tubular.neighborhood.2}. A detailed proof can be found in \cite{Brendle-Wang}. 

\begin{remark}
In Lemma \ref{Phi}, we allow the possibility that $\partial E = \emptyset$. 
\end{remark}

After these preparations, we now complete the proof of Theorem \ref{main.theorem}. Using Lemma \ref{tubular.neighborhood.1} and Lemma \ref{Phi}, we obtain 
\[E \subset \mathcal{N}_{(M,\check{g})}(E_0,s_1+2s_0) \subset \Big \{ |u| \leq \frac{1}{2} \, \min \{\kappa_0 \, \tau^{-1},\kappa_1 \, \tau^{-2}\} \big \}\]
by Lemma \ref{tubular.neighborhood.1} and Lemma \ref{Phi}. Consequently, we can find an open, connected domain $\hat{E}$ with smooth boundary such that the closure of $E$ is contained in $\hat{E}$, and the closure of $\hat{E}$ is contained in the domain $\big \{ |u| < \min \{\kappa_0 \, \tau^{-1},\kappa_1 \, \tau^{-2}\} \big \}$. Using Corollary \ref{quadratic.form}, we conclude that 
\[\int_M |df|_{\check{g}}^2 \, d\text{\rm vol}_{\check{g}} + \frac{1}{2} \int_M R_{\check{g}} \, f^2 \, d\text{\rm vol}_{\check{g}} \geq \int_M Q \, f^2 \, d\text{\rm vol}_{\check{g}}\] 
for every smooth test function $f$ with the property that $f$ is supported in $\hat{E}$ and $f$ is constant near infinity. Finally, since $\alpha \leq 0$, Lemma \ref{decay} implies that $(M,\check{g})$ has nonpositive mass. 

We now follow the arguments in Subsections 3.2--3.8 in \cite{Brendle-Wang}, with $\rho=1$. In conclusion, we obtain an $(n-1)$-dataset (in the sense of Definition 1.3 in \cite{Brendle-Wang}) which has zero mass (in the sense of Definition 1.4 in \cite{Brendle-Wang}). This contradicts Theorem 1.5 in \cite{Brendle-Wang}. This completes the proof of Theorem \ref{main.theorem}. 

\appendix 

\section{A Korevaar-Simon-type estimate for the modified Jang equation}

The following estimate goes back to work of Korevaar-Simon \cite{Korevaar-Simon} and was extended to the Jang equation by Eichmair \cite{Eichmair1} and Eichmair-Metzger \cite{Eichmair-Metzger}. 

\begin{proposition}[cf. N.~Korevaar, L.~Simon \cite{Korevaar-Simon}]
\label{interior.gradient.estimate}
Let $A \geq 4$ and $\sigma>0$. Let $(M,g)$ be a Riemannian manifold of dimension $n$, and let $q$ be a symmetric $(0,2)$-tensor on $M$. Let $\Omega$ be an open domain in $M$ with compact closure, and let $\zeta,\psi \in C(\bar{\Omega}) \cap C^\infty(\Omega)$. We assume that $\text{\rm Ric} \geq -A \, \sigma^{-2} \, g$, $n \, |q|_g \leq A \, \sigma^{-1}$, $n \, |Dq|_g \leq A \, \sigma^{-2}$, $|\psi| \leq \frac{1}{4} \, A \, \sigma$, $|d\psi|_g \leq \frac{1}{4} \, A$ and $n \, |D^2 \psi|_g \leq \frac{1}{4} \, A \, \sigma^{-1}$ at each point in $\Omega$. Let $w \in C(\bar{\Omega}) \cap C^\infty(\Omega)$ be a solution of the PDE 
\begin{equation} 
\label{pde.for.w.1}
\big ( g^{ij} - (1+|dw|_g^2)^{-1} \, \partial^i w \, \partial^j w) \, \big ( (1+|dw|_g^2)^{-\frac{1}{2}} \, D_{i,j}^2 w - q_{ij} \big ) = \zeta^2 \, w 
\end{equation}
in $\Omega$ satisfying $w < \psi$ along $\partial \Omega$. If 
\[(1 + \zeta^2 \, \sigma^2 + |d\zeta|_g^2 \, \sigma^4) \, |w| \leq A \, \sigma\] 
at each point in $\Omega$, then 
\[(e^{A^2 \sigma^{-1} (w-\psi)} - 1) \, (1+|dw|_g^2)^{\frac{1}{2}} \leq C(A)\] 
at each point in $\Omega$, where $C(A)$ is a constant that depends only on $A$.
\end{proposition}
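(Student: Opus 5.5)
We follow the Korevaar--Simon maximum principle argument, in the form adapted to the Jang equation by Eichmair and Eichmair--Metzger. Set $v = (1+|dw|_g^2)^{\frac{1}{2}}$ and $\eta = e^{A^2 \sigma^{-1}(w-\psi)} - 1$, and consider the test function $Z = \eta \, v$ on the open set $\Omega^+ = \{w > \psi\} \cap \Omega$. Since $w < \psi$ on $\partial\Omega$ and $w,\psi$ are continuous on $\bar\Omega$, $\eta$ is negative near $\partial\Omega$. So $Z$, if positive somewhere, attains a positive maximum at an interior point $\bar{x} \in \Omega^+$. We may assume that $Z(\bar x)$ is large and aim for a contradiction.

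\textbf{Step 1: the equation for $v$.} We regard $w$ as a graph in $M \times \mathbb{R}$ with induced metric $\check g = g + dw\otimes dw$ and upward normal $\nu$, so that $v^{-1} = \langle \nu, \partial_t\rangle$. The left-hand side of (\ref{pde.for.w.1}) equals $H - \text{tr}_\Sigma(q)$, where $H$ is the mean curvature of the graph and $q$ is extended trivially in the $t$-direction. We use the standard Jacobi-type identity for $v^{-1}$ (the vertical component of the normal):
\[\Delta_\Sigma v^{-1} + (|h|^2 + \text{Ric}(\nu,\nu)) \, v^{-1} = -\langle \nabla_\Sigma H, \partial_t^\top\rangle .\]
Here $\nabla_\Sigma H$ is computed by differentiating the equation: $H = \text{tr}_\Sigma q + \zeta^2 w$. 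This gives
\[|\nabla_\Sigma H| \leq |\nabla(\text{tr}_\Sigma q)| + 2\zeta|d\zeta||w| + \zeta^2 |\nabla_\Sigma w| .\]
The term $\nabla(\text{tr}_\Sigma q)$ involves $Dq$ together with $|h|\,|q|$, coming from differentiating the normal. We absorb the latter into $|h|^2$ by Cauchy--Schwarz. Using $\text{Ric}\ge -A\sigma^{-2}$, $n|q| \le A\sigma^{-1}$, $n|Dq|\le A\sigma^{-2}$ and $(1+\zeta^2\sigma^2+|d\zeta|^2\sigma^4)|w|\le A\sigma$, we obtain a differential inequality
\[\Delta_\Sigma v \ge \tfrac12 |h|^2 v + 2v^{-1}|\nabla_\Sigma v|^2 - C(A)\sigma^{-2} v .\]
The hypothesis on $\zeta$ is designed exactly to make $\zeta^2 w$ and its derivative bounded by $C(A)\sigma^{-1}$ and $C(A)\sigma^{-2}$.

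\textbf{Step 2: the equation for $\eta$.} On $\Sigma$ we have
\[\Delta_\Sigma w = v^{-1}H\]
up to Christoffel corrections, which vanish for a graph in a product. Here $H$ is bounded by $C(A)\sigma^{-1}$, by the previous step. The Hessian of $\psi$ restricted to $\Sigma$ is bounded by $\tfrac14 A\sigma^{-1}$, plus a gradient term. Moreover, $|\nabla_\Sigma w|^2 = 1 - v^{-2}$, which is close to $1$ when $v$ is large. Hence, at points where $v$ is large,
\[\Delta_\Sigma \eta \ge A^2\sigma^{-1}(\eta+1)\big(A^2\sigma^{-1}|\nabla_\Sigma(w-\psi)|^2 - C\sigma^{-1}\big).\]
Since $|d\psi|\le A/4$, we get $|\nabla_\Sigma(w-\psi)| \ge |\nabla_\Sigma w| - A/4\cdot$ (a factor). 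This is the classical reason for the exponential with exponent $A^2\sigma^{-1}$: the good term $A^4\sigma^{-2}(\eta+1)|\nabla_\Sigma (w-\psi)|^2$ dominates all error terms of size $A^3\sigma^{-2}$.

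\textbf{Step 3: the maximum point.} At $\bar x$ we have $\nabla_\Sigma Z = 0$ and $\Delta_\Sigma Z \le 0$. The first condition gives $\nabla \eta = -\eta v^{-1}\nabla v$, and hence the cross term $2\langle\nabla\eta,\nabla v\rangle = -2\eta v^{-1}|\nabla v|^2$. This cross term is cancelled by the $2v^{-1}|\nabla v|^2$ term from Step 1. We are left with
\[0 \ge \eta\,(\tfrac12|h|^2 - C\sigma^{-2}) v + v\,\Delta\eta .\]
The main obstacle is controlling $|\nabla_\Sigma(w-\psi)|$ from below at $\bar x$: one needs $v(\bar x)$ large, which follows from $Z(\bar x)$ large as long as $\eta$ is bounded. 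And $\eta$ is bounded by $e^{2A^3}$, since $|w|,|\psi|\le A\sigma$. If the tangential gradient of $\psi$ happens to align badly, we instead use $\nabla\eta = -\eta v^{-1}\nabla v$ together with $|\nabla v|\le |h| v$ to bound $\eta$ directly. In either case, with $v$ large we get $\Delta\eta \ge c A^4\sigma^{-2}$, which contradicts the inequality above. Therefore $Z \le C(A)$, which is the claim.
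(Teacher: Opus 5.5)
Your overall scheme is the same as the paper's: $Z=\eta v$, a positive interior maximum, and the good term $A^4\sigma^{-2}(\eta+1)v|\nabla_\Sigma(w-\psi)|^2$. The paper writes this as $\eta\le\Lambda v^{-1}$ with equality at $\bar x$, and compares $L\eta\le\Lambda L(v^{-1})$ for the drift operator $Lf=\check g^{ij}D^2_{ij}f+v^{-1}S^{ij}\partial_iw\,\partial_jf$.

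The gap is in Step 1. Passing from the Jacobi identity for $v^{-1}$ to $v$ multiplies the right-hand side by $v^2$:
\[\Delta_\Sigma v=(|h|^2+\mathrm{Ric}(\nu,\nu))\,v+2v^{-1}|\nabla_\Sigma v|^2+v^2\langle\nabla_\Sigma H,\partial_t^\top\rangle.\]
\begin{itemize}
\item The part of $\nabla_\Sigma H$ coming from $-q(\nu,\nu)$ gives $2v^2\,h(\partial_t^\top,e_j)\,q(e_j,\nu)$, of size up to $2v^2|h||q|$. Cauchy--Schwarz against $\frac12|h|^2v$ then leaves an error of order $v^3|q|^2$, not $C(A)\sigma^{-2}v$.
\item Likewise, $\zeta^2\nabla_\Sigma w$ contributes $v^2\zeta^2|\partial_t^\top|^2=\zeta^2|dw|_g^2$, which is of order $v^2$. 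Moreover, $\zeta$ itself is not bounded by the hypotheses; only $\zeta^2|w|$ and $|d\zeta|^2|w|$ are. So the derivative of $\zeta^2w$ is not $O(\sigma^{-2})$, contrary to your claim.
\end{itemize}
Hence the inequality of Step 1 is not obtained. In particular, the full coefficient $2$ on $v^{-1}|\nabla_\Sigma v|^2$, which your cancellation in Step 3 relies on, is not established.

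The missing observation is that, since $\partial_t$ is parallel, $h(\partial_t^\top,\cdot)=\nabla_\Sigma(v^{-1})$. So the offending $q$-term is really a first-order term $\pm2\,q(\nabla_\Sigma v,\nu)$. Do not absorb it into $|h|^2$. Keep it, and evaluate it at $\bar x$ using $\nabla_\Sigma v=-v\eta^{-1}\nabla_\Sigma\eta$. Then $\eta$ times this term becomes $\mp2v\,q(\nabla_\Sigma\eta,\nu)$, which is bounded by $2A^3\sigma^{-2}(\eta+1)v|\nabla_\Sigma(w-\psi)|$ and is therefore dominated by the good term. This is exactly the role of the $2q^{ij}$ part of the drift $S^{ij}$ in the paper.

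The $\zeta$-term must be handled by its sign. It enters $\Delta_\Sigma v$ as
\[\langle d(\zeta^2w),dw\rangle_g=|\zeta\,dw+w\,d\zeta|_g^2-w^2|d\zeta|_g^2\ge-A^2\sigma^{-2},\]
which is how the paper treats it.

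Finally, your worry in Step 3 about bad alignment is unfounded. Since $\langle\nabla_\Sigma w,\nabla_\Sigma\psi\rangle=v^{-2}\langle dw,d\psi\rangle_g$, you get $|\nabla_\Sigma(w-\psi)|^2\ge1-v^{-2}-\frac12Av^{-1}$, which is close to $1$ once $v$ is large. The fallback you sketch is therefore not needed. Note also that the correct bound is $|\nabla_\Sigma v|\le|h|v^2$, not $|h|v$.
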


\textbf{Proof.} 
Let 
\[\Lambda = \sup_\Omega (e^{A^2 \sigma^{-1} (w-\psi)} - 1) \, (1+|dw|_g^2)^{\frac{1}{2}}.\] 
If $\Lambda \leq 0$, we are done. It remains to consider the case $\Lambda > 0$. Since $w < \psi$ along $\partial \Omega$, we can find a point $\bar{x} \in \Omega$ where the function 
\[(e^{A^2 \sigma^{-1} (w-\psi)} - 1) \, (1+|dw|_g^2)^{\frac{1}{2}}\] 
attains its maximum. Clearly, 
\[e^{A^2 \sigma^{-1} (w-\psi)}-1 \leq \Lambda \, (1+|dw|_g^2)^{-\frac{1}{2}}\] 
at each point in $\Omega$, and equality holds at the point $\bar{x}$. For abbreviation, let 
\[S^{ij} = 2 \, q^{ij} - (1+|dw|_g^2)^{-1} \, q^{kl} \, \partial_k w \, \partial_l w \, g^{ij} - (\text{\rm tr}_g(q) + \zeta^2 \, w) \, g^{ij}.\] 
Then 
\begin{align*} 
&\big ( g^{ij} - (1+|dw|_g^2)^{-1} \, \partial^i w \, \partial^j w \big ) \, D_{i,j}^2(e^{A^2 \sigma^{-1} (w-\psi)}) \\ 
&+ (1+|dw|_g^2)^{-\frac{1}{2}} \, S^{ij} \, \partial_i w \, \partial_j(e^{A^2 \sigma^{-1} (w-\psi)}) \\ 
&\leq \Lambda \, \big ( g^{ij} - (1+|dw|_g^2)^{-1} \, \partial^i w \, \partial^j w \big ) \, D_{i,j}^2 \big ( (1+|dw|_g^2)^{-\frac{1}{2}} \big ) \\ 
&+ \Lambda \, (1+|dw|_g^2)^{-\frac{1}{2}} \, S^{ij} \, \partial_i w \, \partial_j \big ( (1+|dw|_g^2)^{-\frac{1}{2}} \big )
\end{align*}
at the point $\bar{x}$. The identity (\ref{pde.for.w.1}) implies 
\begin{align} 
\label{pde.for.w.2}
&\big ( g^{ij} - (1+|dw|_g^2)^{-1} \, \partial^i w \, \partial^j w) \, D_{i,j}^2 w \notag \\ 
&= -(1+|dw|_g^2)^{-\frac{1}{2}} \, q^{ij} \, \partial_i w \, \partial_j w + (1+|dw|_g^2)^{\frac{1}{2}} \, (\text{\rm tr}_g(q) + \zeta^2 \, w) 
\end{align} 
at each point in $\Omega$. Using the identity 
\begin{align*} 
&\big ( g^{ij} - (1+|dw|_g^2)^{-1} \, \partial^i w \, \partial^j w \big ) \, D_{i,j}^2 \big ( (1+|dw|_g^2)^{-\frac{1}{2}} \big ) \\ 
&= -\big ( g^{ij} - (1+|dw|_g^2)^{-1} \, \partial^i w \, \partial^j w \big ) \, \big ( g^{kl} - (1+|dw|_g^2)^{-1} \, \partial^k w \, \partial^l w \big ) \\ 
&\hspace{10mm} \cdot (1+|dw|_g^2)^{-\frac{3}{2}} \, D_{i,k}^2 w \, D_{j,l}^2 w \\ 
&- (1+|dw|_g^2)^{-\frac{3}{2}} \, \text{\rm Ric}^{kl} \, \partial_k w \, \partial_l w \\ 
&- (1+|dw|_g^2)^{-\frac{3}{2}} \, \partial^k w \, \partial_k \big ( \big ( g^{ij} - (1+|dw|_g^2)^{-1} \, \partial^i w \, \partial^j w \big ) \, D_{i,j}^2 w \big ) 
\end{align*} 
together with (\ref{pde.for.w.2}), we obtain 
\begin{align*} 
&\big ( g^{ij} - (1+|dw|_g^2)^{-1} \, \partial^i w \, \partial^j w \big ) \, D_{i,j}^2 \big ( (1+|dw|_g^2)^{-\frac{1}{2}} \big ) \\ 
&+ (1+|dw|_g^2)^{-\frac{1}{2}} \, S^{ij} \, \partial_i w \, \partial_j \big ( (1+|dw|_g^2)^{-\frac{1}{2}} \big ) \\ 
&= -\big ( g^{ij} - (1+|dw|_g^2)^{-1} \, \partial^i w \, \partial^j w \big ) \, \big ( g^{kl} - (1+|dw|_g^2)^{-1} \, \partial^k w \, \partial^l w \big ) \\ 
&\hspace{10mm} \cdot (1+|dw|_g^2)^{-\frac{3}{2}} \, D_{i,k}^2 w \, D_{j,l}^2 w \\ 
&- (1+|dw|_g^2)^{-\frac{3}{2}} \, \text{\rm Ric}^{kl} \, \partial_k w \, \partial_l w \\ 
&- (1+|dw|_g^2)^{-1} \, D^k q^{ij} \, \big ( g_{ij} - (1+|dw|_g^2)^{-1} \, \partial_i w \, \partial_j w \big ) \, \partial_k w \\ 
&- (1+|dw|_g^2)^{-1} \, (|\zeta \, dw + w \, d\zeta|_g^2 - w^2 \, |d\zeta|_g^2) 
\end{align*} 
at each point in $\Omega$. Using the estimate $\text{\rm Ric} \geq -A \, \sigma^{-2} \, g$, we conclude that  
\begin{align*} 
&\Lambda \, \big ( g^{ij} - (1+|dw|_g^2)^{-1} \, \partial^i w \, \partial^j w \big ) \, D_{i,j}^2 \big ( (1+|dw|_g^2)^{-\frac{1}{2}} \big ) \\ 
&+ \Lambda \, (1+|dw|_g^2)^{-\frac{1}{2}} \, S^{ij} \, \partial_i w \, \partial_j \big ( (1+|dw|_g^2)^{-\frac{1}{2}} \big ) \\ 
&\leq \Lambda \, A \, \sigma^{-2} \, (1+|dw|_g^2)^{-\frac{3}{2}} \, |dw|_g^2 \\ 
&+ \Lambda \, (1+|dw|_g^2)^{-1} \, n \, |Dq|_g \, |dw|_g \\ 
&+ \Lambda \, (1+|dw|_g^2)^{-1} \, w^2 \, |d\zeta|_g^2 \\ 
&\leq A \, \sigma^{-2} \, e^{A^2 \sigma^{-1} (w-\psi)} \, (1+|dw|_g^2)^{-1} \, |dw|_g^2 \\ 
&+ e^{A^2 \sigma^{-1} (w-\psi)} \, (1+|dw|_g^2)^{-\frac{1}{2}} \, n \, |Dq|_g \, |dw|_g \\ 
&+ e^{A^2 \sigma^{-1} (w-\psi)} \, (1+|dw|_g^2)^{-\frac{1}{2}} \, w^2 \, |d\zeta|_g^2 
\end{align*} 
at the point $\bar{x}$. On the other hand, using the identity (\ref{pde.for.w.2}), we obtain 
\begin{align*} 
&\big ( g^{ij} - (1+|dw|_g^2)^{-1} \, \partial^i w \, \partial^j w \big ) \, D_{i,j}^2(e^{A^2 \sigma^{-1} (w-\psi)}) \\ 
&+ (1+|dw|_g^2)^{-\frac{1}{2}} \, S^{ij} \, \partial_i w \, \partial_j(e^{A^2 \sigma^{-1} (w-\psi)}) \\ 
&= A^4 \, \sigma^{-2} \, e^{A^2 \sigma^{-1} (w-\psi)} \, \big ( g^{ij} - (1+|dw|_g^2)^{-1} \, \partial^i w \, \partial^j w \big ) \, \partial_i(w-\psi) \, \partial_j(w-\psi) \\ 
&+ A^2 \, \sigma^{-1} \, e^{A^2 \sigma^{-1} (w-\psi)} \, \big ( g^{ij} - (1+|dw|_g^2)^{-1} \, \partial^i w \, \partial^j w \big ) \, D_{i,j}^2 (w-\psi) \\ 
&+ A^2 \, \sigma^{-1} \, e^{A^2 \sigma^{-1} (w-\psi)} \, (1+|dw|_g^2)^{-\frac{1}{2}} \, S^{ij} \, \partial_i w \, \partial_j(w-\psi) \\ 
&= A^4 \, \sigma^{-2} \, e^{A^2 \sigma^{-1} (w-\psi)} \, (1+|dw|_g^2)^{-1} \, |dw|_g^2 \\ 
&- 2A^4 \, \sigma^{-2} \, e^{A^2 \sigma^{-1} (w-\psi)} \, (1+|dw|_g^2)^{-1} \, \langle dw,d\psi \rangle_g \\ 
&+ A^4 \, \sigma^{-2} \, e^{A^2 \sigma^{-1} (w-\psi)} \, \big ( g^{ij} - (1+|dw|_g^2)^{-1} \, \partial^i w \, \partial^j w \big ) \, \partial_i \psi \, \partial_j \psi \\ 
&- A^2 \, \sigma^{-1} \, e^{A^2 \sigma^{-1} (w-\psi)} \, \big ( g^{ij} - (1+|dw|_g^2)^{-1} \, \partial^i w \, \partial^j w \big ) \, D_{i,j}^2 \psi \\ 
&- 2A^2 \, \sigma^{-1} \, e^{A^2 \sigma^{-1} (w-\psi)} \, (1+|dw|_g^2)^{-\frac{1}{2}} \, q^{ij} \, \partial_i w \, \partial_j \psi \\ 
&+ A^2 \, \sigma^{-1} \, e^{A^2 \sigma^{-1} (w-\psi)} \, (1+|dw|_g^2)^{-\frac{3}{2}} \, q^{ij} \, \partial_i w \, \partial_j w \, (1 + \langle dw,d\psi \rangle_g) \\ 
&+ A^2 \, \sigma^{-1} \, e^{A^2 \sigma^{-1} (w-\psi)} \, (1+|dw|_g^2)^{-\frac{1}{2}} \, (\text{\rm tr}_g(q) + \zeta^2 \, w) \, (1 + \langle dw,d\psi \rangle_g) 
\end{align*} 
at each point in $\Omega$. Consequently, 
\begin{align*} 
&\big ( g^{ij} - (1+|dw|_g^2)^{-1} \, \partial^i w \, \partial^j w \big ) \, D_{i,j}^2(e^{A^2 \sigma^{-1} (w-\psi)}) \\ 
&+ (1+|dw|_g^2)^{-\frac{1}{2}} \, S^{ij} \, \partial_i w \, \partial_j(e^{A^2 \sigma^{-1} (w-\psi)}) \\ 
&\geq A^4 \, \sigma^{-2} \, e^{A^2 \sigma^{-1} (w-\psi)} \, (1+|dw|_g^2)^{-1} \, |dw|_g^2 \\ 
&- 2A^4 \, \sigma^{-2} \, e^{A^2 \sigma^{-1} (w-\psi)} \, (1+|dw|_g^2)^{-1} \, |dw|_g \, |d\psi|_g \\ 
&- A^2 \, \sigma^{-1} \, e^{A^2 \sigma^{-1} (w-\psi)} \, n \, |D^2 \psi|_g \\ 
&- A^2 \, \sigma^{-1} \, e^{A^2 \sigma^{-1} (w-\psi)} \, (1+|dw|_g^2)^{-\frac{1}{2}} \, (2n \, |q|_g + \zeta^2 \, |w|) \, (1+ |dw|_g \, |d\psi|_g) 
\end{align*} 
at each point in $\Omega$. Putting these facts together, we obtain 
\begin{align*} 
&A^4 \, |dw|_g^2 - 2A^4 \, |dw|_g \, |d\psi|_g - A^2 \, \sigma \, (1+|dw|_g^2) \, n \, |D^2 \psi|_g \\ 
&- A^2 \, \sigma \, (1+|dw|_g^2)^{\frac{1}{2}} \, (2n \, |q|_g + \zeta^2 \, |w|) \, (1+|dw|_g \, |d\psi|_g) \\ 
&\leq A \, |dw|_g^2 + \sigma^2 \, (1+|dw|_g^2)^{\frac{1}{2}} \, n \, |Dq|_g \, |dw|_g \\ 
&+ \sigma^2 \, (1+|dw|_g^2)^{\frac{1}{2}} \, w^2 \, |d\zeta|_g^2 
\end{align*} 
at the point $\bar{x}$. In order to estimate the terms on the left hand side, we use the inequalities $|d\psi|_g \leq \frac{1}{4} \, A$, $n \, |D^2 \psi|_g \leq \frac{1}{4} \, A \, \sigma^{-1}$, $n \, |q|_g \leq A \, \sigma^{-1}$, and $\zeta^2 \, |w| \leq A \, \sigma^{-1}$. In order to estimate the terms on the right hand side, we use the inequalities $n \, |Dq|_g \leq A \, \sigma^{-2}$, $|w| \leq A \, \sigma$, and $|d\zeta|_g^2 \, |w| \leq A \, \sigma^{-3}$. Thus, we conclude that 
\begin{align*} 
&A^4 \, |dw|_g^2 - \frac{1}{2} \, A^5 \, |dw|_g - \frac{1}{4} \, A^3 \, (1+|dw|_g^2) - 3A^3 \, (1+|dw|_g^2)^{\frac{1}{2}} \, (1 + \frac{1}{4} \, A \, |dw|_g) \\ 
&\leq A \, |dw|_g^2 + A \, (1+|dw|_g^2)^{\frac{1}{2}} \, |dw|_g + A^2 \, (1+|dw|_g^2)^{\frac{1}{2}} 
\end{align*}
at the point $\bar{x}$. Since $\frac{1}{4} \, A^4 - \frac{1}{4} \, A^3 > 2A$, we can bound $|dw(\bar{x})|_g$ from above by some function of $A$. Since $|\psi| \leq A \, \sigma$ and $|w| \leq A \, \sigma$, we conclude that  
\[(e^{A^2 \sigma^{-1} (w-\psi)} - 1) \, (1+|dw|_g^2)^{\frac{1}{2}} \leq e^{2A^3} \, (1+|dw|_g^2)^{\frac{1}{2}} \leq C(A)\] 
at the point $\bar{x}$. Thus, $\Lambda \leq C(A)$, as claimed. This completes the proof of Proposition \ref{interior.gradient.estimate}. \\

\section{The Schoen-Yau identity for the modified Jang equation}

In this section, we state a crucial identity which originated in the groundbreaking work of Schoen and Yau \cite{Schoen-Yau3}. The following identity is a straightforward generalization of the identity proved in the work of Jang \cite{Jang} and Schoen and Yau \cite{Schoen-Yau3}. 

\begin{proposition}[cf. P.S.~Jang \cite{Jang}; R.~Schoen, S.T.~Yau \cite{Schoen-Yau3}]
\label{Schoen.Yau.identity}
Let $(M,g)$ be a Riemannian manifold. Suppose that $w$ and $\Theta$ are smooth functions on $M$ such that 
\[g^{ik} \, g^{jl} \, \big ( g_{ij} - (1+|dw|_g^2)^{-1} \, \partial_i w \, \partial_j w \big ) \, \big ( (1+|dw|_g^2)^{-\frac{1}{2}} \, D_{k,l}^2 w - q_{kl} \big ) = \Theta\] 
at each point in $M$. We define a Riemannian metric $\check{g}$ and a one-form $\Xi$ by 
\[\check{g}_{ij} = g_{ij} + \partial_i w \, \partial_j w\] 
and 
\[\Xi_k = \frac{1}{2} \, \partial_k \log (1+|dw|_g^2) - (1+|dw|_g^2)^{-\frac{1}{2}} \, g^{ij} \, q_{ik} \, \partial_j w.\] 
Then 
\begin{align*} 
&\frac{1}{2} \, R_{\check{g}} - |\Xi|_{\check{g}}^2 + \text{\rm div}_{\check{g}} \Xi \\ 
&= \frac{1}{2} \, \check{g}^{ik} \, \check{g}^{jl} \, \big ( (1+|dw|_g^2)^{-\frac{1}{2}} \, D_{i,j}^2 w - q_{ij} \big ) \, \big ( (1+|dw|_g^2)^{-\frac{1}{2}} \, D_{k,l}^2 w - q_{kl} \big ) \\ 
&+ \mu - (1+|dw|_g^2)^{-\frac{1}{2}} \, \langle dw,J \rangle_g \\ 
&+ (1+|dw|_g^2)^{-\frac{1}{2}} \, \langle dw,d\Theta \rangle_g + \frac{1}{2} \, \Theta^2 + \Theta \, \text{\rm tr}_{\check{g}}(q) 
\end{align*}
at each point in $M$.
\end{proposition}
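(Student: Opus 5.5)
The plan is to reduce the identity to the classical Schoen--Yau computation for the Jang equation, keeping track of the extra terms produced by the right-hand side $\Theta$. I will view $(M,\check{g})$ as the graph $\Sigma=\{t=w(x)\}$ inside the product $(M\times\mathbb{R},g+dt^2)$. The upward unit normal is
\[\nu=(1+|dw|_g^2)^{-\frac12}(-\nabla w+\partial_t).\]
The second fundamental form of $\Sigma$, expressed in the coordinates $x$, is
\[h_{ij}=(1+|dw|_g^2)^{-\frac12}D^2_{i,j}w.\]
The inverse metric is
\[\check{g}^{ij}=g^{ij}-(1+|dw|_g^2)^{-1}\partial^iw\,\partial^jw.\]
Extend $q$ to $M\times\mathbb{R}$ trivially, so that $q(\partial_t,\cdot)=0$ and $q$ is independent of $t$. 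With this notation the hypothesis reads $\mathrm{tr}_{\check{g}}(h-q)=\Theta$, that is, $H-\mathrm{tr}_\Sigma q=\Theta$. I will also check that $\Xi$ is the one-form $X\mapsto q(X,\nu)-\langle\nabla_X\nu,\partial_t\rangle\cdots$. More precisely, I expect
\[\Xi(X)=-\big(h-q\big)(X,\ \cdot\ )\ \text{paired with } \nabla^\Sigma\langle\nu,\partial_t\rangle/\langle\nu,\partial_t\rangle \ \text{and } q(\cdot,\nu),\]
and I will verify by direct computation that it equals $\tfrac12 d\log(1+|dw|^2)-(1+|dw|^2)^{-1/2}q(\cdot,\nabla w)$.

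The first step is the Gauss equation for $\Sigma$ in the flat-in-$t$ product:
\[R_{\check{g}}=R_g-2\,\mathrm{Ric}_g(\nu,\nu)+H^2-|h|^2.\]
Here $\nu$ has been projected to $M$. Next, write $h=(h-q)+q$ and expand $H^2-|h|^2$. Using $R_g=2\mu+|q|_g^2-\mathrm{tr}_g(q)^2$ and regrouping, one obtains the quadratic term $-|h-q|^2_{\check g}$, together with cross terms involving $\Theta=H-\mathrm{tr}_\Sigma q$ and differences of the form $|q|_g^2-|q|_\Sigma^2$ and $\mathrm{tr}_gq-\mathrm{tr}_\Sigma q$. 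These differences are quadratic expressions in $q(\nu,\cdot)$.

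The second step handles the $\mathrm{Ric}(\nu,\nu)$ term. I will use the Codazzi-type identity, as in Schoen--Yau: compute
\[\mathrm{div}_\Sigma\big(\langle\nu,\partial_t\rangle^{-1}\cdots\big),\]
which amounts to differentiating $H$ along the direction $\nabla^\Sigma w$ via the traced Codazzi equation $\mathrm{div}_\Sigma h-dH=\mathrm{Ric}(\nu,\cdot)^T$. The derivative of $H$ will be replaced by $d(\Theta+\mathrm{tr}_\Sigma q)$. This step produces the term $(1+|dw|^2)^{-1/2}\langle dw,d\Theta\rangle_g$. Similarly, the divergence of $q(\nu,\cdot)^T$ produces $J(\nu)$, which becomes $-(1+|dw|^2)^{-1/2}\langle dw,J\rangle_g$ after the sign convention is fixed. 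The remaining divergence terms assemble into $\mathrm{div}_{\check g}\Xi$. The quadratic leftovers are $\tfrac12|h-q|^2$ (after halving), $-|\Xi|^2$, and the terms $\tfrac12\Theta^2+\Theta\,\mathrm{tr}_{\check g}q$. The last two arise from $\tfrac12H^2$ rewritten as $\tfrac12(\Theta+\mathrm{tr}_\Sigma q)^2$.

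The main obstacle is purely bookkeeping. One must verify that all terms quadratic in $q(\nu,\cdot)$ and in $h(\nu^T,\cdot)$ combine exactly into $-|\Xi|^2_{\check g}$ plus the square $\tfrac12|h-q|^2_{\check g}$, with nothing left over. For this I will first redo the known case $\Theta=0$, which is exactly the Schoen--Yau identity and can be quoted. I will then observe that $\Theta$ enters only through $H=\Theta+\mathrm{tr}_\Sigma q$. It does so in two places: in the Gauss term $\tfrac12H^2$, and in the Codazzi term through $dH$. Subtracting the $\Theta=0$ computation then isolates precisely $\langle dw,d\Theta\rangle(1+|dw|^2)^{-1/2}+\tfrac12\Theta^2+\Theta\,\mathrm{tr}_{\check g}q$, which completes the proof.
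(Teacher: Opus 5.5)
Your route is the one the paper uses. The paper's proof just applies the general Schoen--Yau identity (Lee, Prop.~7.32) to the graph of $w$ in $M\times\mathbb{R}$, keeping $H$ free, and that identity is exactly the Gauss equation plus the Codazzi/Jacobi computation for $\langle\nu,\partial_t\rangle$ that you outline.

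The step that fails is your bookkeeping claim that $H$, and hence $\Theta$, enters only through $\frac12 H^2$ and through $dH$. The $q$-part of $\Xi$ is the one-form $X\mapsto q(\nu,X)$, since $\Xi=-d\log\langle\nu,\partial_t\rangle+q(\nu,\cdot)|_{T\Sigma}$. When you take its divergence, the normal component of $\bar\nabla_{e_i}e_i$ contributes $H\,q(\nu,\nu)$:
\[\mathrm{div}_\Sigma\big(q(\nu,\cdot)^T\big)=\sum_i (\bar\nabla_{e_i}q)(\nu,e_i)-\langle h,q\rangle_{\check g}+H\,q(\nu,\nu).\]
So $H$ appears a third time. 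Substituting $H=\mathrm{tr}_{\check g}(q)+\Theta$ there gives an extra term $\Theta\,q(\nu,\nu)=\Theta\,(1+|dw|_g^2)^{-1}q(\nabla w,\nabla w)$. Added to the $\Theta\,\mathrm{tr}_{\check g}(q)$ coming from $\frac12H^2$, the total is $\Theta\,\mathrm{tr}_g(q)$, not $\Theta\,\mathrm{tr}_{\check g}(q)$. Relatedly, you cannot literally ``subtract the $\Theta=0$ case''. That identity is only established for solutions of Jang's equation, so $H$ must be kept free throughout the computation.

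Carrying out your plan correctly therefore cannot end at the displayed formula, and in fact the formula is false as printed. Take flat $\mathbb{R}^n$, $w=\lambda x_1$ and $q=dx_1\otimes dx_1$. Then:
\begin{itemize}
\item $\check g$ is flat and $\Xi=-\lambda(1+\lambda^2)^{-1/2}dx_1$ is $\check g$-parallel;
\item $\mu=0$, $J=0$, and $\Theta=-(1+\lambda^2)^{-1}$ is constant;
\item the left side is $-|\Xi|_{\check g}^2=-\lambda^2(1+\lambda^2)^{-2}$;
\item the printed right side is $\frac12(1+\lambda^2)^{-2}+\frac12(1+\lambda^2)^{-2}-(1+\lambda^2)^{-2}=0$.
\end{itemize}
With $\Theta\,\mathrm{tr}_g(q)$ in the last term, the right side becomes $(1+\lambda^2)^{-2}-(1+\lambda^2)^{-1}=-\lambda^2(1+\lambda^2)^{-2}$, which matches. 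So the correct identity ends with $\Theta\,\mathrm{tr}_g(q)$.

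This correction is harmless for the paper, which only uses $|\mathrm{tr}(q)|\le n|q|_g$, and that bound also holds for $\mathrm{tr}_g$. Your bookkeeping should be redone so that it lands on the corrected formula.

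Your description of $\Xi$ in the first paragraph is also garbled. The identity to verify is $\Xi=-d\log\langle\nu,\partial_t\rangle+q(\nu,\cdot)|_{T\Sigma}$. Its divergence absorbs the $\Delta_{\check g}\log\langle\nu,\partial_t\rangle$ term produced when you convert $\mathrm{Ric}(\nu,\nu)$ using $\overline{\mathrm{Ric}}(\nu,\partial_t)=0$ and the traced Codazzi equation along $\partial_t^T$.
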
 

\textbf{Proof.} 
This identity can be derived by applying Proposition 7.32 in \cite{Lee} to the graph of $w$ in $M \times \mathbb{R}$. We refer to Subsection 3.6 in \cite{Andersson-Eichmair-Metzger} and Section 2 in \cite{Eichmair3} for further discussion. \\

\begin{remark} 
In the work of Schoen and Yau \cite{Schoen-Yau3}, the identity in Proposition \ref{Schoen.Yau.identity} is applied with $\Theta=0$. A more general version was considered in Eichmair's work (see \cite{Eichmair1}, p.~569--570).
\end{remark}

\end{document}